\documentclass{amsart}
\usepackage{amssymb}
\usepackage{amsmath, amscd}
\usepackage{amsthm}
\usepackage{mathrsfs}
\usepackage[all,cmtip]{xy}
\usepackage[foot]{amsaddr}
\usepackage[T1]{fontenc}
\usepackage{verbatim}
\usepackage{mathrsfs}
\usepackage{enumerate}
\usepackage{mathtools}

\numberwithin{equation}{subsection}

\newtheorem{theorem}{Theorem}[subsection]
\newtheorem{lemma}[theorem]{Lemma}

\newtheorem{corollary}[theorem]{Corollary}
\newtheorem{definition}[theorem]{Definition}

\newtheorem{proposition}[theorem]{Proposition}

\newtheorem*{thm1}{Theorem 1}

\newtheorem*{cor1}{Corollary 1}

\theoremstyle{remark}

\newtheorem{rmk}[theorem]{Remark}

\newcommand{\GZip}{\mathop{\text{$G$-{\tt Zip}}}\nolimits}

\newcommand{\B}{\mathop{\text{{\tt Brh}}}\nolimits}

\newcommand{\FZip}{\mathop{\text{$F$-{\tt Zip}}}\nolimits}

\newcommand{\GF}{\mathop{\text{$G$-{\tt ZipFlag}}}\nolimits}

\newskip\procskipamount
\procskipamount=6pt plus1pt minus1pt

\newskip\interskipamount
\interskipamount=12pt plus1pt minus1pt

\newskip\refskipamount
\refskipamount=6pt plus1pt minus1pt

\newcommand{\procskip}{\vskip\procskipamount}
\newcommand{\interskip}{\vskip\interskipamount}
\newcommand{\refskip}{\vskip\refskipamount}

\newcommand{\procbreak}{\par
   \ifdim\lastskip<\procskipamount\removelastskip
   \penalty-100
   \procskip\fi
   \noindent\ignorespaces}

\newcommand{\titlebreak}{\par%
\ifdim\lastskip<\interskipamount\removelastskip%
\penalty10000%
\interskip\fi%
\noindent}%

\newcommand{\interbreak}{\par%
\ifdim\lastskip<\interskipamount\removelastskip%
\penalty-100%
\interskip\fi%
\noindent\ignorespaces}%

\newcommand{\refbreak}{\par%
\ifdim\lastskip<\refskipamount\removelastskip%
\penalty-100%
\refskip\fi%
\noindent\ignorespaces}%

\newcounter{listcounter}
\newcounter{deflistcounter}
\newcounter{equivcounter}

\newskip{\itemsepamount}
\itemsepamount=0pt plus1pt minus0pt

\newskip{\topsepamount}
\topsepamount=0pt plus2pt minus0pt


\newenvironment{assertionlist}{%
  \begin{list}
    {\upshape (\arabic{listcounter})}
    {\setlength{\leftmargin}{18pt}
     \setlength{\rightmargin}{0pt}
     \setlength{\itemindent}{0pt}
     \setlength{\labelsep}{5pt}
     \setlength{\labelwidth}{13pt}
     \setlength{\listparindent}{\parindent}
     \setlength{\parsep}{0pt}
     \setlength{\itemsep}{\itemsepamount}
     \setlength{\topsep}{\topsepamount}
     \usecounter{listcounter}}}
  {\end{list}}



\newenvironment{definitionlist}{%
  \begin{list}
    {\upshape (\alph{deflistcounter})}
    {\setlength{\leftmargin}{18pt}
     \setlength{\rightmargin}{0pt}
     \setlength{\itemindent}{0pt}
     \setlength{\labelsep}{5pt}
     \setlength{\labelwidth}{13pt}
     \setlength{\listparindent}{\parindent}
     \setlength{\parsep}{0pt}
     \setlength{\itemsep}{\itemsepamount}
     \setlength{\topsep}{\topsepamount}
     \usecounter{deflistcounter}}}
  {\end{list}}


%


\newenvironment{equivlist}{%
  \begin{list}
    {\upshape (\roman{equivcounter})}
    {\setlength{\leftmargin}{18pt}
     \setlength{\rightmargin}{0pt}
     \setlength{\itemindent}{0pt}
     \setlength{\labelsep}{5pt}
     \setlength{\labelwidth}{13pt}
     \setlength{\listparindent}{\parindent}
     \setlength{\parsep}{0pt}
     \setlength{\itemsep}{\itemsepamount}
     \setlength{\topsep}{\topsepamount}
     \usecounter{equivcounter}}}
  {\end{list}}






\newcommand{\Fcal}{{\mathcal F}}

\newcommand{\Ocal}{{\mathcal O}}

\newcommand{\Vcal}{{\mathcal V}}

\newcommand{\Zcal}{{\mathcal Z}}

\newcommand{\bfr}{{\mathfrak b}}

\renewcommand{\AA}{\mathbf{A}}

\newcommand{\DD}{\mathbf{D}}

\newcommand{\FF}{\mathbf{F}}
\newcommand{\GG}{\mathbf{G}}

\newcommand{\NN}{\mathbf{N}}

\newcommand{\QQ}{\mathbf{Q}}

\newcommand{\XX}{\mathbf{X}}

\newcommand{\ZZ}{\mathbf{Z}}

\newcommand{\Xscr}{{\mathscr X}}
\newcommand{\Yscr}{{\mathscr Y}}









%
%

\newcommand{\rk}{\mbox{ rk }}





%


%


\newcommand{\Th}{{\rm Th.}}

\newcommand{\Cor}{{\rm Cor.}}

\newcommand{\Lem}{{\rm Lem.}}

\newcommand{\Def}{{\rm Def.}}

\newcommand{\Prop}{{\rm Prop.}}

\newcommand{\loccit}{{\em loc.\ cit. }}
\newcommand{\loccitn}{{\em loc.\ cit.}}

%


\DeclareMathOperator{\Fl}{Fl}



\renewcommand{\Im}{{\rm Im}}

\DeclareMathOperator{\Lie}{Lie}

\DeclareMathOperator{\Adj}{Ad}

\DeclareMathOperator{\Ker}{Ker}

\DeclareMathOperator{\Span}{Span}
\DeclareMathOperator{\Stab}{Stab}
\DeclareMathOperator{\Spec}{Spec}

\begin{document}

\title{Normalization of closed Ekedahl-Oort strata}
\author{Jean-Stefan Koskivirta}
\date{\today}
\address{J.-S. K. : Department of Mathematics, Imperial College London}
\email{jeanstefan.koskivirta@gmail.com}
\pagestyle{plain}

\begin{abstract}
We apply our theory of partial flag spaces developed in \cite{Goldring-Koskivirta-zip-flags} to study a group-theoretical generalization of the canonical filtration of a truncated Barsotti-Tate group of level 1. As an application, we determine explicitly the normalization of the Zariski closures of Ekedahl-Oort strata of Shimura varieties of Hodge-type as certain closed coarse strata in the associated partial flag spaces.
\end{abstract}
\maketitle
\section*{Introduction}

Let $H$ be a truncated Barsotti-Tate group of level 1 (in short BT1),  over an algebraically closed field $k$ of characteristic $p$, and let $\sigma:k\to k$ denote the map $x\mapsto x^p$. Denote by $D:=\DD(H)$ its Dieudonn\'{e} module, which is a finite-dimensional $k$-vector space $D$ endowed with a $\sigma$-linear endomorphism $F$ and a $\sigma^{-1}$-linear endomorphism $V$ satisfying $FV=VF=0$ and $\Im(F)=\Ker(V)$. Oort showed in \cite{Oort-stratification-moduli-space-abelian-varieties} that there exists a flag of $D$ which is stable by $V$ and $F^{-1}$ and which is coarsest among all such flags, called the canonical filtration of $D$. After choosing a basis of $D$, we obtain a filtration of $k^n$ (where $n=\dim_k(D)$ is the height of $H$). The stabilizer of this flag is a parabolic subgroup $P(H)\subset GL_n$, well-defined up to conjugation. We want to emphasize that this construction attaches a group-theoretical object $P(H)$ to a truncated Barsotti-Tate group of level 1.

The theory of $F$-zips developed in \cite{Moonen-Wedhorn-Discrete-Invariants}, \cite{Pink-Wedhorn-Ziegler-zip-data} and \cite{PinkWedhornZiegler-F-Zips-additional-structure} establishes the precise link between BT1's and group theory. Specifically, isomorphism classes of BT1's of height $n$ and dimension $d$ correspond bijectively to $E$-orbits in $GL_n$, where $E$ is the zip group (see section \ref{subsec Dieudonne zips}). The stack of $F$-zips of type $(n,d)$ can be defined as the quotient stack $\FZip^{n,d}=\left[E\backslash GL_n\right]$. Moreover, there is a natural morphism of stacks $BT^{n,d}_1 \to \FZip^{n,d}$, where $BT^{n,d}_1$ is the stack of BT1's of height $n$ and dimension $d$ over $k$. More generally, let $G$ be a connected reductive group over $\FF_p$, and $P,Q\subset G$ parabolic subgroups (defined over some finite extension of $\FF_p$). Let $L\subset P$ and $M\subset Q$ be Levi subgroups and assume that $\varphi(L)=M$, where $\varphi:G\to G$ is the Frobenius homomorphism. One can define the stack of $G$-zips of type $\Zcal=(G,P,L,Q,M,\varphi)$ as the quotient stack $\GZip^\Zcal=\left[E\backslash G\right]$, where $E\subset P\times Q$ is the zip group (see section \ref{subsec Gzip}). For example, if $G$ is the automorphism group of a PEL-datum, then $\GZip^\Zcal(k)$ classifies BT1's over $k$ of type $\Zcal$ endowed with this additional structure. If $W$ denotes the Weyl group of $G$, the $E$-orbits in $G$ are parametrized by a subset ${}^I W\subset W$ (see section \ref{subsec stratif}). Denote by $G_w\subset G$ the $E$-orbit corresponding to $w\in {}^I W$ and put $Z_w:=[E\backslash G_w]$ the corresponding zip stratum.

In \cite{Goldring-Koskivirta-zip-flags}, we defined for each parabolic $P_0\subset P$ the stack of partial zip flags $\GF^{(\Zcal,P_0)}$ endowed with a natural projection $\pi:\GF^{(\Zcal,P_0)}\to \GZip^\Zcal$ which makes it a $P/P_0$-bundle over $\GZip^\Zcal$. This defines a tower of stacks above $\GZip^\Zcal$. Moreover, the stack $\GF^{(\Zcal,P_0)}$ admits two natural stratifications. In general, one is finer than the other, but they coincide when $P_0$ is a Borel subgroup. The fine strata $Z_{P_0,w}$ are parametrized by $w\in {}^{I_0}W$, where ${}^{I_0}W \subset W$ is a subset containing ${}^I W$ (see section \ref{subsec fine coarse}). The strata $Z_{P_0,w}$ attached to elements $w\in {}^IW$ are called minimal and satisfy $\pi(Z_{P_0,w})=Z_w$ and the restriction $\pi:Z_{P_0,w}\to Z_w$ is finite \'{e}tale. When $P_0=P$, the stack $\GF^{(\Zcal,P)}$ coincides with $\GZip^\Zcal$ and the fine stratification is the stratification by $E$-orbits, whereas the coarse stratification is given by $P\times Q$-orbits. In general, we say that a stratum $Z_{P_0,w}$ has coarse closure if it is open in the coarse stratum containing it. If $Z_{P_0,w}$ has coarse closure, its Zariski closure  $\overline{Z}_{P_0,w}$ is normal.

In the formalism of $G$-zips, one can attach to each $w\in {}^I W$ a parabolic subgroup $P_w\subset P$. In the case $G=GL_n$, if $H$ is a BT1 corresponding to $w\in {}^IW$ under the correspondence between BT1's and $E$-orbits, then $P_w$ is the parabolic $P(H)$ defined above. This is proved in \Prop~\ref{can prop}. Since $P_w$ is canonically attached to $w$, it is natural to ask what special property is satisfied by the stratum $Z_{P_w,w}$ of $\GF^{(\Zcal,P_w)}$. Our main theorem answers this question:

\begin{thm1}\label{thm intro}[Th.~\ref{mainthm}]
Let $w\in {}^I W$. The following properties hold:
\begin{enumerate}[(i)]
\item \label{item isom intro}  $\pi:Z_{P_w,w}\to Z_w$ is an isomorphism.
\item \label{item coarse intro}  $Z_{P_w,w}$ has coarse closure.
\end{enumerate}
Furthermore, among all parabolic subgroups $P_0$ such that ${}^zB\subset P_0\subset P$, the parabolic $P_w$ is the smallest parabolic satisfying \eqref{item isom intro} and the largest one satisfying \eqref{item coarse intro}.
\end{thm1}

The Borel subgroup ${}^z B$ of the above theorem is defined in section \ref{subsec frame}. Note that property \eqref{item isom intro} is obviously satisfied for $P_0=P$ and property \eqref{item coarse intro} is satisfied for $P_0={}^zB$ because fine and coarse strata coincide in this case. Hence the canonical parabolic $P_w$ is the unique intermediate parabolic such that both properties are satisfied. As a consequence, we deduce that the normalization of the Zariski closure $\overline{Z}_w$ is $\tilde{Z}_w:=\Spec(\Ocal(\overline{Z}_{P_w,w}))$ (see Corollary \ref{cor norma GZip}).

Let $X$ be the special fiber of a good reduction Hodge-type Shimura variety, and let $G$ be the attached reductive $\FF_p$-group (see section \ref{subsec Shimura}). In \cite{ZhangEOHodge}, Zhang has constructed a smooth map of stacks
\begin{equation*}
\zeta:X\longrightarrow \GZip^\Zcal
\end{equation*}
where $\Zcal$ is the zip datum attached to $X$ as in \cite{Goldring-Koskivirta-zip-flags}~\S6.2. The Ekedahl-Oort stratification of $X$ is defined as the fibers of $\zeta$. For $w\in {}^IW$, set $X_{w}:=\zeta^{-1}(Z_{w})$.
For any parabolic ${}^z B \subset P_0 \subset P$, define the partial flag space $X_{P_0}$ as the fiber product
\begin{equation*}
\xymatrix@1@M=5pt@C=30pt@R=15pt{
X_{P_0} \ar[r]^-{\zeta_{P_0}} \ar[d]_-{\pi} & \GF^{(\Zcal, P_0)} \ar[d]^-{\pi_{P_0}} \\
X \ar[r]_-{\zeta} & \GZip^{\Zcal}}
\end{equation*} 
For $w\in {}^{I_0}W$, define the fine stratum $X_{P_0,w}:=\zeta_{P_0}^{-1}(Z_{P_0,w})$ of $X_{P_0}$. The space $X_{P_0}$ is a generalization of the flag space considered by Ekedahl and Van der Geer in \cite{EkedahlGeerEO}, where they consider flags refining the Hodge filtration of an abelian variety.

\begin{cor1}\label{cor1 intro}[Cor.~\ref{cor norma Shimura}]
Let $w\in {}^IW$. The normalization of $\overline{X}_{w}$ is the Stein factorization of the map $\pi: \overline{X}_{P_w,w}\to\overline{X}_{w}$. It is isomorphic to $\overline{X}_{w}\times_{\GZip^\Zcal}\tilde{Z}_w$.
\end{cor1}

For Siegel-type Shimura varieties, an analogous result to \Cor~1 was proved by Boxer in \cite[\Th 5.3.1]{Boxer-thesis} using different methods.

We now give an overview of the paper. In section 1, we review the theory of $G$-zips and prove a result on point stabilizers for later use. Section 2 is devoted to the stack of partial $G$-zips and its stratifications. We define minimal strata and give an explicit form for the restriction of the map $\pi$ to a minimal stratum (\Prop~\ref{comdiag}). In section 3, we define the notion of canonical parabolic and explain its relevance with respect to the normalization of a closed stratum of $\GZip^\Zcal$. We prove Theorem \ref{mainthm} after giving criteria for properties \eqref{item isom intro} and \eqref{item coarse intro} above. Finally, we explain in section 4 the correspondence between the classical theory of BT1's and the theory of G-zips, following \cite{Pink-Wedhorn-Ziegler-zip-data}. We establish the link between the parabolic $P_w$ and the canonical parabolic of a BT1.

\subsection*{Acknowledgements}
The author would like to thank Wushi Goldring for many fruitful discussions about this work and Torsten Wedhorn for helpful comments on this paper. I am grateful to the reviewer for suggesting improvements.

\section{Review of $G$-zips}\label{review}

We will need to review some facts about the stack of $G$-zips found in \cite{Pink-Wedhorn-Ziegler-zip-data} and prove a result on the stabilizer of an element by the group $E$.

\subsection{The stack $\GZip^\Zcal$} \label{subsec Gzip}
We fix an algebraic closure $k$ of $\FF_p$. A zip datum is a tuple $\Zcal=(G,P,L,Q,M,\varphi)$ where $G$ is a connected reductive group over $\FF_p$, $\varphi:G\to G$ is the Frobenius homomorphism, $P,Q\subset G$ are parabolic subgroups of $G_k$, $L\subset P$ and $M\subset Q$ are Levi subgroups of $P$ and $Q$ respectively. One imposes the condition $\varphi(L)=M$. One can attach to $\Zcal$ a zip group $E$ defined by
\begin{equation}\label{zipgroup}
E:=\{(p,q)\in P\times Q, \varphi(\overline{p})=\overline{q}\}
\end{equation}
where $\overline{p}\in L$ and $\overline{q}\in M$ denote the projections of $p$ and $q$ via the isomorphisms $P/R_u(P)\simeq L$ and $Q/R_u(Q)\simeq M$. We let $G\times G$ act on $G$ via $(a,b)\cdot g:=agb^{-1}$ and we obtain by restriction an action of $E$ on $G$. The stack of $G$-zips is then isomorphic to the following quotient stack:
\begin{equation}
\GZip^\Zcal\simeq \left[E \backslash G \right].
\end{equation}
When we want to specify the zip datum $\Zcal$, we write sometimes $E_\Zcal$ for the zip group $E$.

\subsection{Frame}\label{subsec frame}
A frame for $\Zcal$ is a triple $(B,T,z)$ where $(B,T)$ is a Borel pair and $z\in G(k)$ satisfying the following conditions:
\begin{enumerate}[(i)]
\item $B\subset Q$
\item ${}^z T\subset L$
\item ${}^z B \subset P$
\item $\varphi({}^z B\cap L)=B\cap M$
\item $\varphi({}^z T)= T$.
\end{enumerate}
We fix throughout a frame $(B,T,z)$ and we define:
\begin{enumerate}[(1)]
\item $\Phi\subset X^*(T)$ : the set of $T$-roots of $G$.
\item $\Phi_+$ : the set of positive roots with respect to $B$.
\item $\Delta\subset \Phi_+$ : the set of positive simple roots.
\item For $\alpha \in \Phi$, let $s_\alpha \in W$ be the corresponding reflection. Then $(W,\{s_\alpha\}_{\alpha \in \Delta})$ is a Coxeter group and we denote by $\ell :W\to \NN$ the length function.
\item For $K\subset \Delta$, Let $W_K \subset W$ be the subgroup generated by the $s_\alpha$ for $\alpha \in K$. Let $w_0\in W$ be the longest element and $w_{0,K}$ the longest element in $W_K$.
\item If $R\subset G$ is a parabolic subgroup containing $B$ and $D$ is the unique Levi subgroup of $R$ containing $T$, then the type of $R$ (or of $D$) is the unique subset $K\subset \Delta$ such that $W(D,T)=W_K$. The type of an arbitrary parabolic $R$ is the type of its unique conjugate containing $B$. Let $I\subset \Delta$ (resp. $J\subset \Delta$) be the type of $P$ (resp. $Q$).

\item For $K\subset \Delta$, ${}^K W$ (resp. $W^K$) : the subset of elements $w\in W$ which are minimal in the coset $W_K w$ (resp. $wW_K$).
\item For $K,R\subset \Delta$, ${}^K W^R:={}^K W \cap W^R$.
\item For an element $x\in {}^I W^J$, define $I_x:=J\cap {}^{x^{-1}}I$. By Proposition 2.7 in \cite{Pink-Wedhorn-Ziegler-zip-data}, any element $w\in W_IxW_J$ can be uniquely written as
\begin{equation}\label{uniqueform}
w=xw_J, \quad \textrm{with} \quad w_J\in {}^{I_x}W_J.
\end{equation}
\end{enumerate}

For $w\in W$, one has an equivalence:
\begin{equation}\label{IWeq}
w\in {}^IW \Longleftrightarrow  {}^zB\cap L = {}^{zw}B \cap L.
\end{equation}

\subsection{Stratification} \label{subsec stratif}
For $w\in W$, choose a representative $\dot{w}\in N_G(T)$, such that $(w_1w_2)^\cdot = \dot{w}_1\dot{w}_2$ whenever $\ell(w_1 w_2)=\ell(w_1)+\ell(w_2)$ (this is possible by choosing a Chevalley system, see \cite{SGA3}, Exp. XXIII, \S6). For $h\in G(k)$, denote by $\Ocal_\Zcal(h)$ the $E$-orbit of $h$ in $G$ and define $\mathfrak{o}_\Zcal(h):=[E\backslash \Ocal_\Zcal(h)]$. By Theorem 7.5 in \cite{Pink-Wedhorn-Ziegler-zip-data}, there is a bijection:
\begin{equation}\label{param}
{}^I W \to \{E \textrm{-orbits in }G\}, \quad w\mapsto G_w:=\Ocal(z\dot{w}).
\end{equation}
Furthermore, for all $w\in {}^I W $, one has:
\begin{equation}\label{dimzipstrata}
\dim(G_w)= \ell(w)+\dim(P).
\end{equation}
For $w\in {}^I W$, we endow the locally closed subset $G_w$ with the reduced structure, and we define the corresponding zip stratum of $\GZip^\Zcal$ by $Z_w:= \left[E\backslash G_w \right]$.

\subsection{Point stabilizers}\label{subsec stab}
\begin{definition}[{\cite[\Def~5.1]{Pink-Wedhorn-Ziegler-zip-data}}]\label{Mw def}
Let $w\in {}^I W$. There is a largest subgroup $M_w$ of ${}^{w^{-1}z^{-1}}L$ satisfying $\varphi({}^{zw}M_w)=M_w$.
\end{definition}
In \loccit \S 5.1, this subgroup is denoted by $H_w$. It is a Levi subgroup of $G$ contained in $M$. We define also:
\begin{align}
L_w&:={}^{zw}M_w \subset L  \label{canlevi} \\
P_w&:=L_w{}^z B \subset P \\ \label{canparab}
Q_w&:=M_w B \subset Q
\end{align}
Since $\varphi(L_w)=M_w$, we obtain a zip datum $\Zcal_w:=(G,P_w,L_w,Q_w,M_w,\varphi)$. Note that $(B,T,z)$ is again a frame for $\Zcal_w$. If an algebraic group $G$ acts on a $k$-scheme $X$ and $x\in X(k)$, we denote by $\Stab_G(x)$ the scheme-theoretical stabilizer of $x$. For an algebraic group $H$, we denote by $H_{\rm red}$ the underlying reduced algebraic group and by $H^\circ$ the identity component of $H$.

\begin{lemma} \label{ptstab} \
\begin{enumerate}
\item \label{item stab red} One has $\Stab_{E}(z\dot{w})_{\rm red}= A \ltimes R$ where $A\subset L_w \times M_w$ is the finite group
\begin{equation}\label{groupA}
A:=\{(x,\varphi(x)), \ x\in L_w, \ {}^{z\dot{w}}\varphi(x)=x\}
\end{equation}
and $R$ is a unipotent smooth connected normal subgroup.
\item \label{item stab idcomp} One has $\Stab_E(z\dot{w})^\circ \subset {}^zB\times B$.
\end{enumerate}
\end{lemma}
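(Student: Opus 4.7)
The plan is to analyze the stabilizer directly from its defining equations. A pair $(p,q) \in E$ stabilizes $z\dot{w}$ under the action $(a,b)\cdot g = agb^{-1}$ iff $p = {}^{z\dot{w}}q$, subject to the zip condition $\varphi(\bar{p}) = \bar{q}$ on Levi components. Projecting onto $L \times M$ gives a short exact sequence
\begin{equation*}
1 \to R \to \Stab_E(z\dot{w})_{\rm red} \to A \to 1,
\end{equation*}
where $R$ is the kernel (pairs $(u,v) \in R_u(P) \times R_u(Q)$ with $u = {}^{z\dot{w}}v$) and $A$ is the image, which I will show agrees with the group defined in \eqref{groupA}.

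To identify $A$, set $x = \bar{p} \in L$, so $\bar{q} = \varphi(x)$. Projecting $p = {}^{z\dot{w}}q$ to Levi parts forces $x = {}^{z\dot{w}}\varphi(x)$ and, in particular, $\varphi(x) \in M \cap {}^{w^{-1}z^{-1}}L$. By the maximality property defining $M_w$ together with the equality $L_w = {}^{zw}M_w$, this is equivalent to $x \in L_w$ with ${}^{z\dot{w}}\varphi(x) = x$, matching the stated description. Finiteness of $A$ is a standard Lang-type fact: the map $x \mapsto {}^{z\dot{w}}\varphi(x)$ is a $\sigma$-linear isogeny of $L_w$ (since $\varphi(L_w) = M_w$ and ${}^{zw}M_w = L_w$), and its fixed-point scheme is finite \'etale. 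To treat the kernel, I project onto the $Q$-factor: $R$ is identified with $R_u(Q) \cap {}^{w^{-1}z^{-1}}R_u(P)$, an intersection of two $T$-stable unipotent subgroups of $G$. Such an intersection is a product of root groups for the roots common to both, hence smooth, connected, and unipotent. The assignment $(x,\varphi(x)) \mapsto (x,\varphi(x)) \in L \times M \subset P \times Q$ defines a splitting of the sequence, landing in $\Stab_E(z\dot{w})$ by the very definition of $A$, and $A \cap R = 1$ since $A$ is reductive while $R$ is unipotent. This yields the semidirect product decomposition in part (1), with $R$ normal as the kernel.

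Part (2) is then a direct consequence: since $A$ is finite, $\Stab_E(z\dot{w})^{\circ}$ equals the connected unipotent group $R$, and by construction $R \subset R_u(P) \times R_u(Q)$. The frame axioms ensure that ${}^{z}B$ is a Borel of $P$ and $B$ a Borel of $Q$, so $R_u(P) \subset {}^{z}B$ and $R_u(Q) \subset B$, whence $R \subset {}^{z}B \times B$. The main technical point needing care is the verification that projecting $p = {}^{z\dot{w}}q$ to Levi quotients produces the formula $\bar{p} = {}^{z\dot{w}}\bar{q}$; this requires the compatibility between the root data of $L$, $M$, ${}^{zw}M$ supplied by the choice $z\dot{w} \in N_G(T)$ and the frame conditions ${}^{z}T \subset L$, $\varphi({}^{z}T) = T$, and is where the structural role of the frame enters essentially.
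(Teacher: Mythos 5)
Your overall strategy---projecting $\Stab_E(z\dot w)$ onto the Levi quotient $L\times M$ and identifying kernel and image---is a natural way to try to reprove the structure theorem directly, but it contains a gap at precisely the spot you flag yourself: the claim that ``projecting $p={}^{z\dot w}q$ to Levi parts forces $\bar p={}^{z\dot w}\bar q$'' is not a valid projection argument. Writing $q=\bar q\,v$ with $v\in R_u(Q)$, one gets $p={}^{z\dot w}\bar q\cdot{}^{z\dot w}v$, but ${}^{z\dot w}\bar q$ and ${}^{z\dot w}v$ do not lie in $L$ and $R_u(P)$ respectively, so this factorization is \emph{not} the Levi decomposition of $p$ in $P$. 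Concretely, for $n=z\dot w$ an element $p\in P\cap{}^nQ$ has its $L$-component lying in $(L\cap{}^nM)(L\cap{}^nR_u(Q))$, while ${}^n\bar q$ lies in ${}^nM$; these are different subgroups in general, so $\bar p$ need not equal ${}^n\bar q$. Since your identification of the image of $\Stab_E(z\dot w)$ in $L\times M$ rests entirely on this step, the short exact sequence $1\to R\to\Stab_E(z\dot w)_{\rm red}\to A\to 1$ with $R$ equal to the kernel $\Stab_E(z\dot w)\cap(R_u(P)\times R_u(Q))$ is not established. (A secondary issue: even granting $\bar p\in L$ with ${}^{z\dot w}\varphi(\bar p)=\bar p$, it is not immediate from the definition of $M_w$ as the \emph{largest algebraic subgroup} of ${}^{w^{-1}z^{-1}}L$ preserved by $\varphi\circ\mathrm{int}_{zw}$ that every individual fixed point lies in $M_w$; that also needs an argument.)

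This gap then propagates to part (2): your deduction that $\Stab_E(z\dot w)^\circ=R\subset R_u(P)\times R_u(Q)\subset{}^zB\times B$ requires knowing that the image of $\Stab_E(z\dot w)_{\rm red}$ in $L\times M$ is finite, which is exactly the unproved point. By contrast, the paper treats part (1) as a black box (PWZ, Thm.\ 8.1) and proves part (2) by a Lie-algebra computation which avoids the issue entirely: since $d\varphi=0$, one has $\Lie(E)=\Lie(P)\times\Lie(R_u(Q))$, and the stabilizer condition gives $dp\in\Lie(P)\cap\Adj_{z\dot w}\Lie(R_u(Q))=\Lie(P\cap{}^{zw}R_u(Q))$; the containment $P\cap{}^{zw}R_u(Q)\subset{}^zB$ is then obtained from PWZ Prop.\ 4.12 applied to the zip datum at the $x$-part of $w=xw_J$. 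This yields the weaker (but sufficient) conclusion $\Stab_E(z\dot w)^\circ\subset{}^zB\times B$ without ever establishing $\Stab^\circ\subset R_u(P)\times R_u(Q)$. Your proposed route is cleaner \emph{if} one already knows $R=\Stab\cap(R_u(P)\times R_u(Q))$, but as written that is assumed rather than proved. To repair it you would either need to import the full strength of the PWZ structure theorem as the paper does, or give a genuine argument for the Levi-image computation, which is nontrivial and is the actual content of PWZ Thm.\ 8.1.
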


\begin{proof}
The first part is \Th~8.1 in \loccit To prove \eqref{item stab idcomp}, it suffices to show that $\Stab_E(z\dot{w})^\circ \subset {}^zB\times G$, or equivalently $\Lie(\Stab_E(z\dot{w}))\subset \Lie({}^zB)\times \Lie(G)$. We follow the proof of \Th~8.5 of \loccit An arbitrary tangent vector of $E$ at $1$ has the form $(1+dp,1+dv)$ for $dp\in \Lie(P)$ and $dv\in \Lie(V)$. This element stabilizes $z\dot{w}$ if and only if $dp=\Adj_{z\dot{w}}(dv)$. Hence 
\begin{equation}
dp\in \Lie(P)\cap \Adj_{z\dot{w}}(\Lie(V))=\Lie(P\cap {}^{zw}V)
\end{equation}
Hence it suffices to show $P\cap {}^{zw}V\subset {}^zB$. This amounts to $L\cap {}^{zw}V\subset L\cap {}^zB$ and equivalently $M\cap \varphi({}^{zw}V)\subset M\cap\varphi({}^zB)=M\cap B$. This is proved in \Prop 4.12 of \loccit More precisely, the authors define in construction 4.3 a group $V_x$ (note that the element $z$ is denoted by $g$ there), where $w=xw_J$ is a decomposition as in \eqref{uniqueform}. One has $V_{x}=M\cap \varphi({}^{z\dot{x}}V)=M\cap \varphi({}^{zw}V)$ because $w_J\in W_J$, so ${}^{w_J}V=V$. \Prop~4.12 of \loccit shows that $(M\cap B,T,1)$ is a frame for $\Zcal_{\dot{x}}$, so in particular one has $V_x\subset M\cap B$. This terminates the proof of the lemma.

\end{proof}

\section{The stack of partial zip flags} \label{sec zip flags}
We recall in this section some of the results of \cite{Goldring-Koskivirta-zip-flags}.
\subsection{Fine and coarse flag strata}\label{subsec fine coarse}
For each parabolic subgroup $P_0$ satisfying ${}^z B \subset P_0\subset P$, we defined in \loccit\S2 a stack $\GF^{(\Zcal,P_0)}$ which parametrizes $G$-zips of type $\Zcal$ endowed with a compatible $P_0$-torsor. There is an isomorphism
\begin{equation}
\GF^{(\Zcal,P_0)}\simeq \left[E \backslash \left(G\times P/P_0 \right) \right]
\end{equation}
where $E$ acts on $G\times P/P_0$ by $(a,b)\cdot (g,xP_0):=(agb^{-1},axP_0)$. Furthermore, there is a natural projection map $\pi:\GF^{(\Zcal,P_0)}\to \GZip^\Zcal$ which is a $P/P_0$-bundle.

Denote by $L_0\subset P_0$ the Levi subgroup containing ${}^z T$ (note that $L_0\subset L$). We define a second zip datum $\Zcal_0=(G,P_0,L_0,Q_0,M_0,\varphi)$ by setting: 
\begin{align}
M_0&:=\varphi(L_0)\subset M \label{mzero} \\
Q_0&:=M_0 B\subset Q.\label{qzero}
\end{align}
Note that $(B,T,z)$ is again a frame of $\Zcal_0$. By \loccit \S 3.1, there is a natural morphism of stacks
\begin{equation}
\Psi_{P_0}:\GF^{(\Zcal,P_0)}\to \GZip^{\Zcal_0}
\end{equation}
which is an $\AA^r$-bundle for $r=\dim(P/P_0)$ (Proposition 3.1.1 in \loccitn). It is induced by the map $G\times P \to G$, $(g,a)\mapsto a^{-1}g \varphi(\overline{a})$. Let $I_0$ and $J_0$ denote respectively the types of $P_0$ and $Q_0$. For $w\in {}^{I_0} W$, we define the fine flag stratum $Z_{P_0,w}$ of $\GF^{(\Zcal,P_0)}$ as the locally closed substack
\begin{equation}
Z_{P_0,w}:=\Psi_{P_0}^{-1}(\mathfrak{o}_{\Zcal_0}(z\dot{w}))
\end{equation}
endowed with the reduced structure. Explicitly, one has $Z_{P_0,w}=\left[E\backslash G_{P_0,w} \right]$ where $G_{P_0,w}$ is the algebraic subvariety of $ G\times P/P_{0}$ defined by
\begin{equation}
G_{P_0,w}:=\{(g,aP_0)\in G\times P/P_{0}, \ a^{-1}g\varphi(\overline{a})\in \Ocal_{\Zcal_0}(z\dot{w})\}.
\end{equation}
Denote by $\B^{\Zcal_0}$ the quotient stack $\left[P_0 \backslash G / Q_0 \right]$, called the Bruhat stack. Since $E_{\Zcal_0}\subset P_0\times Q_0$, there is a natural projection morphism $\beta:\GZip^{\Zcal_0}\to \B^{\Zcal_0}$. The composition $\Psi_{P_0}\circ \beta$ gives a smooth map of stacks
\begin{equation}
\psi_{P_0}:\GF^{(\Zcal,P_0)} \to \B^{\Zcal_0}.
\end{equation}
By \cite[\Lem~1.4]{Wedhorn-bruhat}, the set $\{z\dot{x}, x\in {}^{I_0}W^{J_0}\}$ is a set of representatives of the $P_0\times Q_0$-orbits in $G$ (pay attention to the fact that ${}^zB\subset P$). For $x\in {}^{I_0}W^{J_0}$, write $\bfr(x):=[P_0\backslash (P_0z\dot{x}Q_0)/Q_0]$ (locally closed substack of $\B^{\Zcal_0}$) and define the coarse flag stratum $\ZZ_{P_0,x}$ as
\begin{equation}
\ZZ_{P_0,x}:=\psi_{P_0}^{-1}(\bfr(x))
\end{equation}
endowed with the reduced structure. Explicitly, one has $\ZZ_{P_0,x}=\left[E\backslash \GG_{P_0,x} \right]$ where $\GG_{P_0,x}$ is the subvariety of $G\times P/P_0$ defined by
\begin{equation}
\GG_{P_0,x}:=\{(g,aP_0)\in G\times P/P_{0}, \ a g\varphi(\overline{a})^{-1}\in P_0z\dot{x}Q_0 \}.
\end{equation}

All fine and coarse flag strata are smooth. A coarse stratum is a union of fine strata and the Zariski closure of a coarse flag stratum is normal. In each coarse stratum there is a unique open fine stratum.

\begin{definition}
We say that a fine flag stratum $Z$ has coarse closure if it is open in the coarse stratum that contains it, equivalently, if its Zariski closure coincides with the Zariski closure of a coarse flag stratum.
\end{definition}
In particular, the Zariski closure $\overline{Z}$ of such a stratum is normal (\cite[\Prop~2.2.1(1)]{Goldring-Koskivirta-zip-flags}).

\subsection{Minimal strata}
Recall that we defined in \cite{Goldring-Koskivirta-zip-flags} a minimal flag stratum as a flag stratum $Z_{P_0,w}$ parametrized by an element $w\in {}^I W$. For a minimal stratum one has $\pi(Z_{P_0,w})=Z_w$ and the induced morphism $\pi:Z_{P_0,w}\to Z_w$ is finite (\Prop~3.2.2 of \loccitn). The following proposition shows that it is also \'{e}tale. For $w\in {}^IW$, denote by $\tilde{\pi}:G_{P_0,w}\to G_w$ the first projection, it is an $E$-equivariant map.

\begin{proposition}\label{comdiag}
Let $w\in {}^I W$ and denote by $S:=\Stab_{E}(z\dot{w})$ the stabilizer of $z\dot{w}$ in $E$ and define $S_{P_0}:=S \cap  (P_0\times G)$.
\begin{enumerate}[(1)]
\item \label{item prop comdiag} There is a commutative diagram
$$\xymatrix@1@M=3pt{
Z_{P_0,w} \ar[r]^-{\simeq} \ar[d]_{\pi} & \left[1/S_{P_0}  \right] \ar[d] \\
Z_{w} \ar[r]^-{\simeq}  & \left[1/S \right]}$$
where the horizontal maps are isomorphisms and the right-hand side vertical map is the natural projection.
\item \label{item etale} The map $\pi:Z_{P_0,w}\to Z_w$ is finite \'{e}tale.
\item \label{item pi isom} The map $\pi:Z_{P_0,w}\to Z_w$ is an isomorphism if and only if the inclusion $S\subset P_0 \times G$ holds.
\end{enumerate}
\end{proposition}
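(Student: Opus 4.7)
The plan is to establish (1) first and derive (2) and (3) as essentially formal consequences. The orbit-stabilizer description gives $Z_w = [E\backslash G_w] \cong [1/S]$, since $G_w = E\cdot z\dot{w}$ is a single $E$-orbit with stabilizer $S$. The analogue $Z_{P_0,w} \cong [1/S_{P_0}]$ requires three observations: (a) $(z\dot{w}, 1\cdot P_0) \in G_{P_0,w}$, which is immediate since $z\dot{w} \in \Ocal_{\Zcal_0}(z\dot{w})$; (b) its $E$-stabilizer is precisely $S_{P_0} = S \cap (P_0 \times G)$, also immediate; and (c) $G_{P_0,w}$ equals the single $E$-orbit $E\cdot(z\dot{w}, 1\cdot P_0)$. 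Once (c) is proved, the diagram \eqref{item prop comdiag} is the canonical one induced by the inclusion $S_{P_0} \hookrightarrow S$.

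To establish (c) I would combine irreducibility with a dimension count, then upgrade density to equality by an explicit construction. The variety $\widetilde{G}_{P_0,w} := \{(g,a) \in G\times P : a^{-1}g\varphi(\bar{a}) \in \Ocal_{\Zcal_0}(z\dot{w})\}$ is irreducible, since its projection to $\Ocal_{\Zcal_0}(z\dot{w})$ is surjective with fibre isomorphic to $P$; hence $G_{P_0,w}$, being its quotient by the free right $P_0$-action, is also irreducible. By the finiteness of $\pi$ (\cite{Goldring-Koskivirta-zip-flags}~Prop.~3.2.2) one has $\dim G_{P_0,w} = \dim G_w$. By Lemma~\ref{ptstab}\eqref{item stab idcomp}, $S^\circ \subset {}^zB \times B \subset P_0 \times G$, so $S^\circ \subset S_{P_0}$ and $\dim S_{P_0} = \dim S$; thus $\dim(E\cdot(z\dot{w},1\cdot P_0)) = \dim E - \dim S_{P_0} = \dim G_w$, showing the orbit is open dense in the irreducible $G_{P_0,w}$. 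To convert density into equality, given $(g, aP_0) \in G_{P_0,w}$ I would write $a^{-1}g\varphi(\bar{a}) = p_0 z\dot{w} q_0^{-1}$ with $(p_0, q_0) \in E_{\Zcal_0}$ and seek $(p,q) \in E$ with $(p,q)\cdot(z\dot{w}, 1\cdot P_0) = (g,aP_0)$; taking $p = ap_0$ handles the flag coordinate, and the frame compatibility $\varphi({}^zB\cap L) = B\cap M$ together with the induced identity $\varphi(P_0\cap L) = Q_0\cap M$ (which follows from $P_0 = L_0\cdot{}^zB$ and $Q_0 = M_0 B$) and the $\Stab_{E_{\Zcal_0}}(z\dot{w})$-freedom in $(p_0,q_0)$ allow us to produce the required $q$ satisfying $\varphi(\bar{p}) = \bar{q}$.

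Granting (1), parts (2) and (3) are formal. A morphism $[1/S_{P_0}] \to [1/S]$ induced by an inclusion of group schemes is finite \'etale iff $S/S_{P_0}$ is finite \'etale, iff $S_{P_0} \supset S^\circ$; this holds by Lemma~\ref{ptstab}\eqref{item stab idcomp}, yielding \eqref{item etale}. The same morphism is an isomorphism iff $S_{P_0} = S$, equivalently $S \subset P_0 \times G$, yielding \eqref{item pi isom}. I expect the main obstacle to be the equality $G_{P_0,w} = E\cdot(z\dot{w}, 1\cdot P_0)$: the dimension argument gives only open density, and the explicit construction of $(p,q) \in E$ requires careful tracking of the Levi decompositions of $P_0$ inside $P$ on one side and of $Q_0$ inside $Q$ on the other, since $E$ and $E_{\Zcal_0}$ impose compatibility on different Levi quotients and differ by a unipotent-radical condition.
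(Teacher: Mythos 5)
Your strategy for deriving \eqref{item etale} and \eqref{item pi isom} from \eqref{item prop comdiag} is exactly the paper's, and your observations (a) and (b) are correct and immediate. The substantive difference is in how you establish (c), that $G_{P_0,w}$ is a single $E$-orbit. The paper avoids any direct argument: it invokes the Borel case, proved in \cite{Goldring-Koskivirta-Strata-Hasse-v2}~\Prop~5.4.5, and then reduces general $P_0$ to $P_0 = {}^zB$ by noting (\cite{Goldring-Koskivirta-zip-flags}~\Prop~3.2.2) that the natural $E$-equivariant projection $G_{{}^zB,w}\to G_{P_0,w}$ is surjective, so $G_{P_0,w}$ is the image of a single orbit, hence a single orbit.

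Your intrinsic route has a gap at the last step, though you correctly flag it. The irreducibility of $G_{P_0,w}$ (the $\Ocal_{\Zcal_0}(z\dot{w})\times P$-trivialization and the free $P_0$-quotient are fine) and the dimension count via Lemma~\ref{ptstab}\eqref{item stab idcomp} do show the orbit of $(z\dot{w},1\cdot P_0)$ is open dense. But the explicit construction you propose for upgrading density to equality does not go through as sketched: taking $p=ap_0$ and $q=\varphi(\bar{a})q_0$ requires $\varphi(\overline{p_0}^{L})=\overline{q_0}^{M}$ (Levi projections to $L$, $M$), whereas $(p_0,q_0)\in E_{\Zcal_0}$ only guarantees $\varphi(\overline{p_0}^{L_0})=\overline{q_0}^{M_0}$; these projections differ by the unipotent parts of $P_0\cap L$ and $Q_0\cap M$, and there is no obvious way to correct for this. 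You do not need the construction at all, however. Since $\tilde{\pi}:G_{P_0,w}\to G_w$ is finite and $G_w$ is a single $E$-orbit, every $E$-orbit $O\subset G_{P_0,w}$ has $\tilde{\pi}(O)=G_w$ with finite fibers, hence $\dim O = \dim G_w = \dim G_{P_0,w}$; thus every orbit is open. Since $G_{P_0,w}$ is irreducible, hence connected, it is a single orbit. This closes the gap cleanly using only facts you already have in hand. With that repair, your argument is a valid alternative to the paper's citation-based reduction, and has the merit of being self-contained modulo Lemma~\ref{ptstab} and Proposition 3.2.2.
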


\begin{proof}
We first prove \eqref{item prop comdiag}. There is a natural identification $G_w\simeq \left[E/S \right]$ because $G_w$ is the $E$-orbit of $z\dot{w}$. It follows that $Z_w\simeq [E\backslash E/S]\simeq [1/S]$. Similarly, we claim that the variety $G_{P_0,w}$ consists of a single $E$-orbit. This was proved in \cite{Goldring-Koskivirta-Strata-Hasse-v2} \Prop~5.4.5 in the case when $P_0$ is a Borel subgroup. For a general $P_0$, we may reduce to the Borel case as follows: By Proposition 3.2.2 of \cite{Goldring-Koskivirta-zip-flags}, we have a natural $E$-equivariant surjective projection map $G_{{}^zB,w}\to G_{P_0,w}$, hence $G_{P_0,w}$ consists of a single $E$-orbit. We thus can identify $Z_{P_0,w}\simeq \left[E \backslash E/S' \right]$ where $S'=\Stab_E(z\dot{w},1)$. It is clear that $S'=S_{P_0}$, so the result follows.

We now show \eqref{item etale}. By \Prop~3.2.2 of \loccitn, we know that $\pi:Z_{P_0,w}\to Z_w$ is finite. By \eqref{item prop comdiag}, it is equivalent to show that $S/S_{P_0}$ is an \'{e}tale scheme. By Lemma \ref{ptstab}~\eqref{item stab idcomp}, we have $S^\circ \subset S\cap({}^zB\times G) \subset S_{P_0}$. Hence the quotient map $S\to S/S_{P_0}$ factors through a surjective map $\pi_0(S)\to S/S_{P_0}$, which shows that $S/S_{P_0}$ is \'{e}tale.

Finally, the last assertion follows immediately from \eqref{item prop comdiag}.
\end{proof}

\section{The canonical parabolic}
We fix an element $w\in {}^I W$. Recall that we defined in \eqref{canparab} a parabolic subgroup $P_w \subset P$.
\subsection{Definition}
Let $P_0$ be a parabolic subgroup of $G$ such that ${}^z B \subset P_0 \subset P$.
\begin{definition}\label{canonical}
We say that $P_0$ is a canonical parabolic subgroup for $w$ if the following properties are satisfied:
\begin{enumerate}[(i)]
\item \label{item isom} The map $\pi:Z_{P_0,w}\to Z_w$ is an isomorphism.
\item \label{item coarse} The stratum $Z_{P_0,w}$ has coarse closure.
\end{enumerate}
\end{definition}
Using the notations of \Prop~\ref{comdiag}, property \eqref{item isom} is equivalent to $S\subset P_0\times G$. The justification of this definition is the following: For $P_0=P$, Condition \eqref{item isom} is obviously satisfied. On the other hand, if $P_0={}^z B$, then \eqref{item coarse} is satisfied because coarse and fine strata coincide. For a given $w$, a canonical parabolic for $w$ is an intermediate parabolic subgroup $P_0$ satisfying both conditions. A priori neither the existence nor the uniqueness of such a parabolic is clear.

We give justification for this definition. Let $P_0$ be a canonical parabolic subgroup for $w$. We have morphisms:
\begin{equation}
\pi:\overline{Z}_{P_0,w}\to \overline{Z}_w, \quad \tilde{\pi}:\overline{G}_{P_0,w}\to \overline{G}_w
\end{equation}
which yield isomorphisms $Z_{P_0,w}\simeq Z_w$ and $G_{P_0,w}\simeq G_w$. Since $Z_{P_0,w}$ has coarse closure, the stack (resp. variety) $\overline{Z}_{P_0,w}$ (resp. $\overline{G}_{P_0,w}$) is normal. We deduce the following proposition:

\begin{proposition}\label{normalization canonical}
Let $P_0$ be a canonical parabolic subgroup for $w\in {}^I W$. Write $w=xw_J$ as in \eqref{uniqueform}. Then the normalization of $\overline{G}_w$ is the Stein factorization of the map $\tilde{\pi}:\overline{G}_{P_0,w}\to \overline{G}_w$. It is isomorphic to $\Spec(\Ocal(\overline{\GG}_{P_0,x}))$, where
\begin{equation}
\overline{\GG}_{P_0,x}=\overline{G}_{P_0,w}=\{(g,aP_0)\in G\times P/P_{0}, \ a^{-1}g\varphi(\overline{a})\in \overline{P_0z\dot{w}Q_0} \}
\end{equation}
and the first projection induces an isomorphism $G_{P_0,w}\simeq G_w$.
\end{proposition}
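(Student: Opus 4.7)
The plan combines the two defining features of a canonical parabolic: property (i) makes $\tilde{\pi}$ proper and birational, while property (ii) makes the source $\overline{G}_{P_0,w}$ normal. Stein factorization then converts these inputs directly into the normalization of $\overline{G}_w$.

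First I would verify that $\tilde{\pi}:\overline{G}_{P_0,w}\to\overline{G}_w$ is proper and birational. The first projection $G\times P/P_0\to G$ is proper since $P/P_0$ is a projective variety, so its restriction to the closed subscheme $\overline{G}_{P_0,w}$ is proper. Property (i) says that $\pi:Z_{P_0,w}\to Z_w$ is an isomorphism of stacks; passing to underlying schemes, $\tilde{\pi}$ restricts to an $E$-equivariant isomorphism $G_{P_0,w}\simeq G_w$, which gives the last assertion of the proposition. Since $G_{P_0,w}$ is open and dense in $\overline{G}_{P_0,w}$ (and similarly for $G_w$), $\tilde{\pi}$ is then surjective and birational. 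Property (ii) together with \cite[Prop.~2.2.1]{Goldring-Koskivirta-zip-flags} implies $\overline{G}_{P_0,w}$ is normal, and evaluating the defining equation of a coarse stratum at the base point $(z\dot w,1)\in G_{P_0,w}$ identifies the relevant $P_0\times Q_0$-orbit as $P_0 z\dot w Q_0$, giving the displayed set-theoretic description of $\overline{\GG}_{P_0,x}=\overline{G}_{P_0,w}$. The labeling by $x\in{}^IW^J$ (viewed inside ${}^{I_0}W^{J_0}$ via the inclusions $I_0\subset I$, $J_0\subset J$) comes from the decomposition $w=xw_J$ in \eqref{uniqueform}.

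Next I would apply Stein factorization. Since $\tilde{\pi}$ is proper, it factors as $\tilde{\pi}=f\circ g$ with $X':=\Spec_{\overline{G}_w}(\tilde{\pi}_{*}\Ocal_{\overline{G}_{P_0,w}})$, where $g$ has geometrically connected fibers and $f$ is finite. Because $\overline{G}_{P_0,w}$ is normal and integral, the coherent $\Ocal_{\overline{G}_w}$-algebra $\tilde{\pi}_{*}\Ocal$ is integrally closed in the constant sheaf of rational functions on $X'$, so $X'$ is normal. Birationality of $\tilde{\pi}$ forces $f$ to be birational as well, so $f:X'\to\overline{G}_w$ is a finite birational morphism from a normal integral scheme to the reduced irreducible scheme $\overline{G}_w$; by uniqueness of the integral closure of $\Ocal_{\overline{G}_w}$ in its function field, $f$ coincides with the normalization morphism. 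The notation $\Spec(\Ocal(\overline{\GG}_{P_0,x}))$ is the standard shorthand for this middle scheme $X'$ of the Stein factorization.

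The main obstacle I expect is the minor Weyl-group bookkeeping needed to match the element $x\in{}^IW^J$ coming from $w=xw_J$ with the indexing of coarse strata by elements of ${}^{I_0}W^{J_0}$, i.e., verifying that $z\dot w\in P_0 z\dot x Q_0$ so that the coarse cell containing $G_{P_0,w}$ really is $\GG_{P_0,x}$ in the notation of Section~\ref{subsec fine coarse}. Once this identification and the normality of $\overline{G}_{P_0,w}$ are in place, the rest is a textbook application of Stein factorization.
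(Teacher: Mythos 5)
Your proposal is correct and follows essentially the same route as the paper: the paper arrives at the proposition by noting that property (i) yields the isomorphism $G_{P_0,w}\simeq G_w$ and property (ii) yields normality of $\overline{G}_{P_0,w}$, and then lets Stein factorization do the rest. Your write-up simply spells out the properness, birationality, and normality inputs that make the Stein factorization argument go through, and correctly flags the minor Weyl-group bookkeeping needed to identify the coarse stratum containing $G_{P_0,w}$.
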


The following theorem is the main result of this paper. Its proof will follow from the results of \S\ref{sec criterion i} and \S\ref{sec criterion ii}.
\begin{theorem}\label{mainthm}
Let $w\in {}^I W$. The parabolic subgroup $P_w$ is the unique canonical parabolic subgroup for $w$. More precisely, among all parabolic subgroups ${}^zB\subset P_0 \subset P$, the following holds:
\begin{enumerate}[(a)]
\item $P_w$ is the smallest parabolic $P_0$ such that $\pi:Z_{P_0,w}\to Z_w$ is an isomorphism.
\item $P_w$ is the largest parabolic $P_0$ such that $Z_{P_0,w}$ has coarse closure.
\end{enumerate}
\end{theorem}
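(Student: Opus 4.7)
The proof will split according to the two conditions in Definition~\ref{canonical}. The plan is, for each of \eqref{item isom} and \eqref{item coarse}, to establish a self-contained criterion in terms of $P_0$ and $w$ alone, then verify that $P_w$ satisfies both criteria and is extremal in the claimed direction. Since the two criteria cut from opposite sides, $P_w$ will then be forced to be the unique parabolic satisfying both, and in particular canonical.

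For condition \eqref{item isom}, the starting point is Proposition~\ref{comdiag}~\eqref{item pi isom}: the morphism $\pi\colon Z_{P_0,w}\to Z_w$ is an isomorphism if and only if $S:=\Stab_E(z\dot{w})\subset P_0\times G$, i.e.\ $\pr_1(S)\subset P_0$, where $\pr_1\colon E\to P$ is the first projection. Lemma~\ref{ptstab} gives the necessary control: on the one hand $S^\circ\subset {}^zB\times B$, so $\pr_1(S^\circ)\subset {}^zB\subset P_w$; on the other hand, the finite component group is represented by $A\subset L_w\times M_w$, whose first projection is the fixed-point subgroup of the twisted isogeny $x\mapsto {}^{z\dot w}\varphi(x)$ of $L_w$. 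Together these give $\pr_1(S)\subset L_w\cdot {}^zB=P_w$, so $P_w$ satisfies \eqref{item isom}. For the minimality I plan to invoke a Lang--Steinberg-type argument applied to the twisted isogeny on the connected reductive group $L_w$: its finite group of fixed points is not contained in any proper parabolic subgroup of $L_w$. Hence any parabolic $P_0\supset {}^zB$ with $\pr_1(S)\subset P_0$ must satisfy $P_0\cap L_w=L_w$, whence $L_w\subset P_0$ and $P_w\subset P_0$.

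For condition \eqref{item coarse}, the criterion will be derived from the fact recalled in \S\ref{subsec fine coarse} that each coarse flag stratum contains a unique open dense fine stratum: $Z_{P_0,w}$ has coarse closure if and only if it is this open fine stratum of its ambient $\ZZ_{P_0,x}$, where $w=xw_{J_0}$ is the decomposition \eqref{uniqueform} associated with the zip datum $\Zcal_0$. The plan is to translate the resulting length-maximality of $w_{J_0}$ in ${}^{I_{0,x}}W_{J_0}$ into a direct comparison between $P_0$ and $P_w$, using the definitions $L_w={}^{zw}M_w\subset L$, $P_w=L_w\cdot {}^zB$, and the characterisation \eqref{IWeq} of elements of ${}^IW$. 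The expected outcome is the clean equivalence: $Z_{P_0,w}$ has coarse closure if and only if $P_0\subset P_w$. This would simultaneously yield coarse closure for $P_0=P_w$ (so $P_w$ is canonical) and the maximality asserted in part~(b), after which part~(a) from step one completes the uniqueness.

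The main obstacle will be the combinatorial analysis underlying the second criterion: showing that enlarging $P_0$ strictly beyond $P_w$ forces $w$ to cease being the top element of its $\Zcal_0$-coarse stratum. This reduces to a Weyl-theoretic identity linking the data defining $P_w$ (through $M_w$ of Definition~\ref{Mw def}) with length-maximality for $w_{J_0}\in {}^{I_{0,x}}W_{J_0}$, and one direction requires exhibiting, for each simple root in the type of $P_0$ not lying in the type of $P_w$, a strictly longer fine stratum inside the same coarse stratum. By contrast, the analysis for \eqref{item isom} is largely formal once Lemma~\ref{ptstab} and the Lang-type density are in hand.
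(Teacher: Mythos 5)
Your outline for part (a) follows essentially the same path as the paper: Proposition~\ref{comdiag}~\eqref{item pi isom} reduces the isomorphism condition to $S\subset P_0\times G$, Lemma~\ref{ptstab} reduces this to the finite group $A$ of \eqref{groupA}, and a Lang--Steinberg argument on the twisted Frobenius $x\mapsto{}^{z\dot{w}}\varphi(x)$ of $L_w$ gives minimality. The density fact you invoke --- that the fixed points of this Steinberg endomorphism of $L_w$ cannot lie inside a proper parabolic of $L_w$ containing ${}^zT$ --- is precisely Lemma~\ref{Fp lemma} of the paper, proved there by an order count in finite groups of Lie type; modulo that lemma, your plan for (a) is correct and matches the paper's.

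For part (b) you take a genuinely different route. You want to characterise coarse closure by the length-maximality of $w_{J_0}$ in ${}^{I_{0,x}}W_{J_0}$ and then translate this into a containment between $P_0$ and $P_w$, whereas Lemma~\ref{lemma coarse} compares $\dim\Ocal_{E_{\Zcal_0}}(z\dot{w})$ with $\dim(P_0 z\dot{w}Q_0)$ and reduces to an explicit computation with Borel subgroups of $L_0$. Your combinatorial starting point (open fine stratum $=$ maximal length) is fine, but the ``clean equivalence'' you expect to reach --- that $Z_{P_0,w}$ has coarse closure if and only if $P_0\subset P_w$ --- is false. The criterion the paper actually proves is ${}^{z\dot{w}}M_0=L_0$, i.e.\ ${}^{z\dot{w}}\varphi(L_0)=L_0$, and this condition is \emph{not} downward-closed in $L_0$: the twisted Frobenius ${}^{z\dot{w}}\varphi(\cdot)$ stabilises $L_w$ but is the Frobenius of a possibly non-split $\FF_p$-form of $L_w$, so it can act nontrivially on the Dynkin diagram of $L_w$, and then a Levi ${}^zT\subset L_0\subsetneq L_w$ need not be stable under it. Only the two one-sided implications hold and are needed: ${}^{z\dot{w}}\varphi(L_w)=L_w$ by construction, so $P_w$ has coarse closure; and coarse closure forces ${}^{z\dot{w}}\varphi(L_0)=L_0$, whence $L_0\subset L_w$ by the maximality built into Definition~\ref{Mw def}. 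So your plan for (b) is aimed at a false target; the combinatorial translation you flag as the ``main obstacle'' is the crux of the argument, it is not supplied, and it would have to land on the Frobenius-stability criterion rather than on the containment $P_0\subset P_w$.
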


\subsection{A criterion for Condition (i)}\label{sec criterion i}

\begin{lemma}\label{lemma criterion isom}
Let ${}^zB\subset P_0 \subset P$ be a parabolic subgroup. The following assertions are equivalent:
\begin{enumerate}[(1)]
\item \label{item lemma isom} The map $\pi:Z_{P_0,w}\to Z_w$ is an isomorphism.
\item \label{item inclusion} One has $P_w\subset P_0$.
\end{enumerate}
\end{lemma}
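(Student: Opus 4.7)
The plan is to invoke \Prop~\ref{comdiag}~\eqref{item pi isom}, which translates \eqref{item lemma isom} into the scheme-theoretic inclusion $S \subset P_0 \times G$, where $S := \Stab_E(z\dot{w})$, and then to analyze $S$ via \Lem~\ref{ptstab}. From \eqref{item stab idcomp}, $S^\circ \subset {}^zB \times B$ is automatically in $P_0 \times G$. From \eqref{item stab red}, writing $S_{\rm red} = A \ltimes R$, one has $\pr_1(R) \subset {}^zB \subset P_0$, while $\pr_1(A) = L_w^\tau$ equals the fixed-point set of the isogeny $\tau := {}^{z\dot{w}}\varphi$ acting on $L_w$. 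Since $P_0 \times G$ is reduced, the inclusion $S \subset P_0 \times G$ is equivalent to $L_w^\tau \subset P_0$. The direction \eqref{item inclusion}$\Rightarrow$\eqref{item lemma isom} is then immediate, since $L_w^\tau \subset L_w \subset P_w \subset P_0$.

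For the converse, assume $L_w^\tau \subset P_0$ and set $P' := P_0 \cap L_w$. Since ${}^zT \subset L_w$ (as $M_w$ contains $T$) and ${}^zB \subset P_0$, the intersection $P'$ is a parabolic of $L_w$ containing the Borel $B_{L_w} := {}^zB \cap L_w$. I first check that $B_{L_w}$ is $\tau$-stable: the frame property $\varphi({}^zB \cap L) = B \cap M$ together with $\varphi(L_w) = M_w$ gives $\varphi(B_{L_w}) = B \cap M_w$, whence $\tau(B_{L_w}) = {}^{zw}B \cap L_w$, and this coincides with $B_{L_w}$ by \eqref{IWeq}. Then $P^* := \bigcap_{i \geq 0} \tau^i(P')$ is a $\tau$-stable parabolic of $L_w$ containing $B_{L_w}$ and $L_w^\tau$. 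Since $\tau$ is a Steinberg endomorphism of the connected reductive group $L_w$, the standard structure theory of finite groups of Lie type implies that the only $\tau$-stable parabolic of $L_w$ containing both $B_{L_w}$ and $L_w^\tau$ is $L_w$ itself. Hence $P^* = L_w$, forcing $P' = L_w$, and thus $P_w = L_w \cdot {}^zB \subset P_0$.

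The main obstacle is the final step: showing that a proper $\tau$-stable parabolic of $L_w$ containing $B_{L_w}$ cannot contain all of $L_w^\tau$. This relies on the Bruhat decomposition of $L_w^\tau$ with respect to $B_{L_w}^\tau$, which ensures that $L_w^\tau$ contains representatives of every element in the $\tau$-fixed Weyl group of $(L_w, {}^zT)$; such representatives cannot all be accommodated in the Levi of a proper standard $\tau$-stable parabolic of $L_w$.
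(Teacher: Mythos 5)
Your proposal is correct, and the initial reductions are identical to the paper's: you invoke \Prop~\ref{comdiag}~\eqref{item pi isom} to translate \eqref{item lemma isom} into the scheme-theoretic inclusion $S \subset P_0 \times G$, and then use \Lem~\ref{ptstab} to isolate the finite group $A_1 = L_w^\tau$ (with $\tau := {}^{z\dot{w}}\varphi$) as the only obstruction. One small expository quibble: the sentence ``Since $P_0 \times G$ is reduced, the inclusion $S \subset P_0 \times G$ is equivalent to $L_w^\tau \subset P_0$'' does not quite give the right justification. Reducedness of the target alone does not let you pass from $S_{\rm red}$ to $S$; what makes it work is precisely the fact you already recorded, namely $S^\circ \subset {}^zB \times B \subset P_0 \times G$, so the nilpotent thickening lives inside $P_0 \times G$ and the inclusion is controlled by the component representatives in $A$. (The paper phrases this as the \'etaleness of $S/S_{P_0}$.)

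Where you diverge from the paper is in the concluding step, i.e.\ showing $L_w^\tau \subset P_0 \Rightarrow L_w \subset P_0$. The paper untwists $\tau$ via Steinberg's theorem, writing $z\dot{w} = a^{-1}\varphi(a)$, so that ${}^aL_w$ carries a genuine $\FF_p$-structure and $L_w^\tau$ becomes $({}^aL_w)(\FF_p)$; it then proves the self-contained Lemma~\ref{Fp lemma} by comparing the $p$-part of $|G(\FF_p)|$ (Carter's order formula) against the dimension of a maximal unipotent subgroup, forcing $H$ to contain a Borel and hence be all of $G$. You instead keep the twisted endomorphism $\tau$ in place, observe that $B_{L_w}$ is $\tau$-stable (via the frame axioms and \eqref{IWeq}), saturate $P_0 \cap L_w$ to a $\tau$-stable standard parabolic $P^*$, and invoke the Bruhat decomposition of the finite group $L_w^\tau$ with its BN-pair to conclude that no proper $\tau$-stable standard parabolic can swallow $L_w^\tau$. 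Both routes ultimately rest on Lang--Steinberg and the structure of finite groups of Lie type, but the concrete tools differ: the paper uses an order count, you use the BN-pair combinatorics. Your route avoids having to prove a separate lemma about $\FF_p$-rational parabolics with no rational Borel (which is why the paper's Lemma~\ref{Fp lemma} only assumes a rational torus), at the cost of outsourcing the final step to ``standard structure theory.'' One more small point: to get a literally $\tau$-stable $P^*$ you should take $\bigcap_{i\in\ZZ}\tau^i(P')$ rather than $\bigcap_{i\geq 0}$; since $\tau$ permutes the simple roots of $L_w$ with finite order the two coincide, but that deserves a word.
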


\begin{proof}
Using the notation of \Prop~\ref{comdiag}, we know that $\pi:Z_{P_0,w}\to Z_w$ is an isomorphism if and only if $S_{P_0}:=S \cap (P_0\times G)=S$. By the same proposition, we know that the quotient $S/S_{P_0}$ is a finite affine \'{e}tale scheme over $k$. In particular, we have $S\subset P_0\times G$ if and only if $S_{\rm red}\subset P_0\times G$.

By Lemma \ref{ptstab}, we can write $S_{\rm red}=A \ltimes R$ with $A$ the finite group given by equation \eqref{groupA} of Lemma \ref{ptstab} and $R$ a smooth unipotent connected normal subgroup. Write $R_{P_0}:=R\cap (P_0\times G)$. The inclusion $R\subset S$ induces a closed embedding
\begin{equation}
R/R_{P_0} \to S/S_{P_0}.
\end{equation}
Hence $R/R_{P_0}$ is a finite, smooth, connected $k$-scheme, so $R/R_{P_0}=\Spec(k)$, hence $R\subset P_0\times G$. It follows that $S\subset P_0\times G$ if and only if $A\subset P_0\times G$, which is equivalent to $A_1\subset P_0$, where 
\begin{equation}\label{inclusion A1}
A_1:=\{x\in L_w, \ {}^{z\dot{w}}\varphi(x)=x\}.
\end{equation}
By Steinberg's theorem we can write $z\dot{w}=a^{-1}\varphi(a)$ with $a\in G(k)$. Then it is easy to see that the subgroup ${}^{a}L_w$ is defined over $\FF_p$ and the inclusion $A_1\subset P_0$ is equivalent to
\begin{equation}\label{new inclusion}
({}^{a}L_w)(\FF_p) \subset {}^{a}P_0.
\end{equation}
Note that both ${}^{a}L_w$ and ${}^{a}P_0$ contain the torus ${}^{az}T$, which is defined over $\FF_p$. Thus Lemma \ref{Fp lemma} below shows that \eqref{new inclusion} is equivalent to ${}^{a}L_w \subset {}^{a}P_0$, hence $L_w\subset P_0$, which is the same as $P_w\subset P_0$. This terminates the proof.
\end{proof}

\begin{lemma}\label{Fp lemma}
Let $G$ be a connected reductive group over $\FF_p$. Let $L$ be a Levi $\FF_p$-subgroup of $G$ and $P$ be a parabolic subgroup of $G_{k}$. Assume that there exists a maximal $\FF_p$-torus $T$ contained in $L\cap P$ and that $L(\FF_p)\subset P$. Then one has $L\subset P$.
\end{lemma}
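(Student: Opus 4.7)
The plan is two-step: first reduce to the case $G=L$, then show that the Zariski closure in $L$ of the abstract subgroup generated by $T(k)$ and $L(\FF_p)$ equals $L$. The conclusion $L\subset P$ then follows immediately.

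For the reduction, note that $L\cap P$ is a parabolic subgroup of $L_k$ containing $T$: its set of roots with respect to $T$ is $\Phi(L,T)\cap \Phi_P$, where $\Phi_P\subset \Phi(G,T)$ is the parabolic subset associated to $P$, and this is itself a parabolic subset of $\Phi(L,T)$ by a direct verification (both the closure-under-addition and $\Phi_P\cup(-\Phi_P)=\Phi(G,T)$ properties pass to the intersection with $\Phi(L,T)$). Since $L(\FF_p)\subset L\cap P$, it suffices to prove the lemma after replacing $G$ by $L$ and $P$ by $L\cap P$.

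Now let $M$ denote the Zariski closure in $L$ of the abstract subgroup generated by $T(k)$ and $L(\FF_p)$. Then $M$ is a closed, $\sigma$-stable (hence $\FF_p$-defined) subgroup of $L$ contained in $P$. To prove $M=L$, it suffices to show that every absolute root subgroup $U_\alpha$ of $L_k$ with respect to $T_k$ is contained in $M$. Since $M\supset T$, the intersection $M\cap U_\alpha$ is a $T$-stable closed subgroup scheme of $U_\alpha\cong \GG_a$; because $T$ acts through the nontrivial character $\alpha$, a direct analysis of such subgroup schemes shows they are either all of $U_\alpha$ or infinitesimal subgroups with a single geometric point. So it suffices to exhibit at least two distinct geometric points in $M\cap U_\alpha$. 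When $\alpha$ is Frobenius-fixed, $U_\alpha$ is defined over $\FF_p$ and $U_\alpha(\FF_p)\subset L(\FF_p)\subset M$ supplies $p\geq 2$ such points, giving $U_\alpha\subset M$.

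When $\alpha$ lies in a Frobenius orbit $O=\{\alpha_1,\ldots,\alpha_r\}$ of size $r\geq 2$, I would consider the $\FF_p$-subgroup $V_O$ of $L$ generated by $\{U_\beta\}_{\beta\in O}$. In the case where these root subgroups commute pairwise, $V_O$ is isomorphic to the Weil restriction $\res_{\FF_{p^r}/\FF_p}U_{\alpha_1}$; its $\FF_p$-points number $p^r$ and lie in $M$, and the intersection $M\cap V_O$, being $T$-stable and $\FF_p$-defined, corresponds via Weil restriction to a $T$-stable closed subgroup of $U_{\alpha_1}$. A count of $\FF_{p^r}$-points (an infinitesimal subgroup of $U_{\alpha_1}\cong\GG_a$ has a single $\FF_{p^r}$-point, while $U_{\alpha_1}$ itself has $p^r$) forces this subgroup to be $U_{\alpha_1}$, whence $U_{\alpha_1}\subset M$. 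The main obstacle is the non-abelian case, where $\alpha_i+\alpha_j$ is again a root for some distinct $\alpha_i,\alpha_j\in O$; this requires a d\'evissage through the filtration of the relevant $\FF_p$-unipotent subgroup (e.g. a relative root subgroup with respect to the maximal $\FF_p$-split subtorus $S\subset T$) by its commutator subgroups, applying the abelian analysis above at each graded piece.
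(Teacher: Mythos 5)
Your approach is genuinely different from the paper's. The paper replaces $P$ by the $\FF_p$-subgroup $H:=L\cap\bigcap_{i\in\ZZ}\sigma^i(P)$, uses a structural result of Digne--Michel to write $H=L_0\ltimes U_0$ with $L_0$ a Levi and $U_0$ unipotent, and then deduces from Carter's formula for the $p$-part of $|G(\FF_p)|$ that $H$ contains a maximal unipotent subgroup, hence a Borel, hence is all of $G$. This is a clean global counting argument. You instead reduce to $G=L$, form the Zariski closure $M$ of $\langle T(k),L(\FF_p)\rangle$, and try to prove $M=L$ root subgroup by root subgroup. The Frobenius-fixed case is fine, and the idea of using $T$-stability to force $M\cap U_\alpha$ to be either all of $U_\alpha$ or infinitesimal is a nice local observation. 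Your reduction step (that $L\cap P$ is a parabolic of $L_k$ containing $T$) is correct.

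However, there is a genuine gap that you yourself flag: the case of a non-abelian Frobenius orbit is only sketched, and the d\'evissage through the commutator filtration of the relevant $\FF_p$-unipotent group is not carried out. This case really does occur (already for quasi-split $\mathrm{SU}_3$, where the two simple roots of $\mathrm{SL}_3$ are swapped by Frobenius and their sum is a root), so it cannot be waved away. Moreover, even in the abelian orbit case your argument has a soft spot: you assert that the $T$-stable, $\FF_p$-defined subgroup $M\cap V_O$ of $V_O\cong\res_{\FF_{p^r}/\FF_p}U_{\alpha_1}$ ``corresponds via Weil restriction to a $T$-stable closed subgroup of $U_{\alpha_1}$,'' but not every $\FF_p$-subgroup scheme of a Weil restriction arises as a Weil restriction, and you would need to argue carefully from the $T$-weights $\alpha_1,\dots,\alpha_r$ that the homogeneous defining ideal has the required product form. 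As it stands, the proposal is an interesting alternative strategy but not a complete proof; the paper's counting argument sidesteps the orbit-by-orbit analysis entirely and avoids both of these difficulties.
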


\begin{proof}
Define a subgroup of $G$ by
$$H:=L\cap \bigcap_{i\in \ZZ}\sigma^i(P)= \bigcap_{i\in \ZZ}L\cap \sigma^i(P).$$
It is clear that $H\subset L$, $H$ is defined over $\FF_p$, and $L(\FF_p)=H(\FF_p)$. Furthermore, $H$ is an intersection of parabolic subgroups of $L$ containing $T$. Hence it suffices to prove the following claim: Let $G$ be a connected reductive group over $\FF_p$, $T\subset G$ a maximal $\FF_p$-torus, and $T\subset H\subset G$ an $\FF_p$-subgroup which is an intersection of parabolic subgroups of $G_k$ containing $T$, and assume that $H(\FF_p)=G(\FF_p)$. Then one has $H=G$.

We now prove the claim. Using inductively \cite[\Prop~2.1]{digne-michel}, one shows that an intersection of parabolic subgroups $P_1,...,P_m$ containing $T$ is connected and can be written as a semidirect product 
\begin{equation}
\bigcap_{i=1}^m P_i=L_0 \ltimes U_0
\end{equation}
where $L_0$ is a Levi subgroup of $G$ containing $T$ and $U_0$ is a unipotent connected subgroup of $G$, normalized by $L_0$. Applying this to $H$, we can write $H=L_0\ltimes U_0$. Since $H$ is defined over $\FF_p$, so are $L_0$ and $U_0$.

By \cite[\Th~3.4.1]{Carter-Lie-type}, the highest power of $p$ dividing $|G(\FF_p)|$ is $p^N$, where $N=|\Phi_+|$ is the dimension of any maximal unipotent subgroup of $G_k$. Since $G(\FF_p)=H(\FF_p)=L_0(\FF_p)\times U_0(\FF_p)$, we deduce that for all maximal unipotent subgroup $U'$ in $L_0$, the subgroup $U'\times U_0$ is unipotent maximal in $G$. In particular, $H$ contains a Borel subgroup, so $H$ is a parabolic subgroup. Then $H=G$ follows from \cite[XXVI, 5.11]{SGA3}

\end{proof}

\subsection{A criterion for Condition (ii)}\label{sec criterion ii}
We examine Condition (ii) of Definition \ref{canonical}. Let ${}^zB\subset P_0 \subset P$ be a parabolic subgroup and let $L_0$, $M_0$, $Q_0$, $\Zcal_0$ as defined in section \ref{subsec fine coarse}. 
\begin{lemma} \label{lemma coarse}
Let ${}^zB\subset P_0 \subset P$ be a parabolic subgroup. The following assertions are equivalent:
\begin{enumerate}[(1)]
\item $Z_{P_0,w}$ has coarse closure.
\item One has ${}^{z\dot{w}}M_0=L_0$.
\end{enumerate} 
\end{lemma}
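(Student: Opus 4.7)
The strategy is to reduce the equivalence, via the smooth $\AA^r$-bundle $\Psi_{P_0}\colon\GF^{(\Zcal,P_0)}\to\GZip^{\Zcal_0}$, to an orbit equality inside $G$, and then to settle that equality using a Lang-Steinberg surjectivity argument together with dimension bookkeeping. Since $\psi_{P_0}=\beta\circ\Psi_{P_0}$ and $\Psi_{P_0}$ is smooth with connected fibres, the fine stratum $Z_{P_0,w}$ is the preimage under $\Psi_{P_0}$ of the zip stratum $Z_w^{\Zcal_0}\subset\GZip^{\Zcal_0}$, and the coarse stratum $\ZZ_{P_0,x}$ containing it (where $x\in{}^{I_0}W^{J_0}$ is the unique element with $w\in W_{I_0}xW_{J_0}$) is the preimage of $\beta^{-1}(\bfr(x))$. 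Hence $Z_{P_0,w}$ has coarse closure if and only if $Z_w^{\Zcal_0}$ is open in $\beta^{-1}(\bfr(x))$, equivalently the $E_{\Zcal_0}$-orbit of $z\dot w$ in $G$ equals the $(P_0\times Q_0)$-orbit $P_0z\dot wQ_0$.

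To test this orbit equality, pass to the $E_{\Zcal_0}$-quotient: the map $(p,q)\mapsto\bar q\cdot\varphi(\bar p)^{-1}$ factors through an isomorphism $(P_0\times Q_0)/E_{\Zcal_0}\xrightarrow{\sim}M_0$. The stabiliser of $z\dot w$ is $\Stab_{P_0\times Q_0}(z\dot w)=\{(p,{}^{(z\dot w)^{-1}}p):p\in P_0\cap{}^{z\dot w}Q_0\}$, and the induced map to the quotient reads
\begin{equation*}
\theta\colon\Stab_{P_0\times Q_0}(z\dot w)\longrightarrow M_0,\qquad (p,{}^{(z\dot w)^{-1}}p)\longmapsto\overline{{}^{(z\dot w)^{-1}}p}\cdot\varphi(\bar p)^{-1}.
\end{equation*}
The two orbits then coincide if and only if $\theta$ is surjective. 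If ${}^{z\dot w}M_0=L_0$, then $L_0\subset{}^{z\dot w}Q_0$, so every $\ell\in L_0$ gives rise to a stabiliser element $(\ell,{}^{(z\dot w)^{-1}}\ell)$ on which $\theta$ acts as $\ell\mapsto{}^{(z\dot w)^{-1}}\ell\cdot\varphi(\ell)^{-1}$. After the change of variable $h={}^{(z\dot w)^{-1}}\ell\in M_0$ this becomes the Lang map $h\mapsto h\cdot\sigma(h)^{-1}$ on the connected group $M_0$, with $\sigma=\varphi\circ\mathrm{Int}(z\dot w)$ a Frobenius-type endomorphism of $M_0$; Lang-Steinberg then forces $\theta$ to be surjective, giving (2)$\Rightarrow$(1).

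For the converse direction (1)$\Rightarrow$(2), translate orbit equality into a dimension identity. By~\eqref{dimzipstrata} applied to $\Zcal_0$ one has $\dim(E_{\Zcal_0}\cdot z\dot w)=\ell(w)+\dim P_0$, and $\dim(P_0\times Q_0)-\dim E_{\Zcal_0}=\dim L_0$, so orbit equality is equivalent to $\dim\theta(\Stab_{P_0\times Q_0}(z\dot w))=\dim L_0$. Assuming ${}^{z\dot w}M_0\neq L_0$, I would argue that $\theta(\Stab)$ fails to fill out $M_0$: decomposing along the Levi and unipotent parts of $P_0=L_0\ltimes R_u(P_0)$ and $Q_0=M_0\ltimes R_u(Q_0)$, the Levi component of $\theta(\Stab)$ is controlled by $L_0\cap{}^{z\dot w}M_0$, and since $L_0$ and ${}^{z\dot w}M_0$ are Levi subgroups of equal dimension both containing the torus ${}^zT$, their intersection is a proper subgroup of strictly smaller dimension; the unipotent contribution to $\theta$ lies inside $R_u(Q_0)$ by construction and therefore cannot compensate. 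The main obstacle lies in this last step: cleanly isolating the Levi versus unipotent pieces of $P_0\cap{}^{z\dot w}Q_0$ and showing that the projection of the stabiliser to $L_0$ really is confined to the Levi intersection; for this I would exploit the frame conditions ${}^zB\subset P_0$ and $B\subset Q_0$ together with the Bruhat-type decomposition used in \cite[\S4]{Pink-Wedhorn-Ziegler-zip-data}.
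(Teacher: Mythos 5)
Your reduction of ``coarse closure'' to a statement about $E_{\Zcal_0}$-orbits versus $(P_0\times Q_0)$-orbits in $G$ matches the opening move of the paper's proof, but there is a slip: you pass from ``$\Ocal_{E_{\Zcal_0}}(z\dot w)$ is \emph{open} in $P_0z\dot wQ_0$'' to ``the two orbits are \emph{equal}.'' These are not the same a priori (the double coset $P_0z\dot wQ_0$ may contain several $E_{\Zcal_0}$-orbits, only one of which is open and dense), and the paper works throughout with the dimension/openness formulation. This slip is mostly harmless for your argument, since for $(2)\Rightarrow(1)$ you prove the stronger orbit equality and for $(1)\Rightarrow(2)$ you actually use the dimension count, but it should be stated accurately.

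Your treatment of $(2)\Rightarrow(1)$ is correct and genuinely different from the paper's. You introduce the morphism $\theta\colon\Stab_{P_0\times Q_0}(z\dot w)\to M_0$ induced by $(p,q)\mapsto\bar q\,\varphi(\bar p)^{-1}$, note that orbit equality amounts to surjectivity of $\theta$, and then observe that when ${}^{z\dot w}M_0=L_0$ the restriction of $\theta$ to $L_0$ becomes a twisted Lang map $h\mapsto h\,\sigma(h)^{-1}$ on the connected group $M_0$ with $\sigma=\varphi\circ\mathrm{Int}(z\dot w)$ a Steinberg endomorphism, so Lang--Steinberg gives surjectivity. This is a nice alternative to the paper's purely dimension-theoretic argument, and it has the advantage of producing the stronger conclusion that the $E_{\Zcal_0}$-orbit is literally all of $P_0z\dot wQ_0$, not merely open in it.

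The gap is in $(1)\Rightarrow(2)$, and you candidly flag it. You reduce to showing $\dim\theta(\Stab_{P_0\times Q_0}(z\dot w))<\dim L_0$ whenever ${}^{z\dot w}M_0\neq L_0$, and you propose to bound the ``Levi part'' of the image by $L_0\cap{}^{z\dot w}M_0$ and then argue the unipotent part cannot compensate. But because $\mathrm{Int}((z\dot w)^{-1})$ does not respect the Levi decompositions $P_0=L_0\ltimes R_u(P_0)$ and $Q_0=M_0\ltimes R_u(Q_0)$ precisely when $(2)$ fails, the $M_0$-component of $\overline{{}^{(z\dot w)^{-1}}p}$ can receive contributions from $R_u(P_0)$, and it is not clear that these land only in $R_u(Q_0)$. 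Filling this requires some control on the relative position of the two Levi decompositions, and that is exactly what your proposal does not supply. The paper sidesteps this entirely: it compares the dimension count for $P_0$ against the base case $P_0={}^zB$ (which always has coarse closure), uses the identity $P_0\cap{}^zB=P_0\cap{}^{z\dot w}B$ coming from $w\in{}^IW$ via \eqref{IWeq}, reduces to a containment of the opposite Borel ${}^zB'\cap L_0\subset{}^{z\dot w}Q_0$, and finally invokes the fact that a reductive group is generated by two opposite Borel subgroups to upgrade this to ${}^{(z\dot w)^{-1}}L_0\subset Q_0$, hence ${}^{(z\dot w)^{-1}}L_0=M_0$. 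Both directions of the equivalence fall out of that single chain of reformulations. To complete your approach you would need an independent proof of the Levi-containment step; as written, the $(1)\Rightarrow(2)$ direction is not established.
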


\begin{proof}
The stratum $Z_{P_0,w}$ has coarse closure if and only if $\Ocal_{E_{\Zcal_0}}(z\dot{w})\subset P_0z\dot{w}Q_0$ is an open embedding, which is equivalent to the equality of their dimensions. Note that $(B,T,z)$ is again a frame of $\Zcal_0$, so formula \eqref{dimzipstrata} shows that
\begin{equation}
\dim (\Ocal_{E_{\Zcal_0}}(z\dot{w}))=\dim(P_0)+\ell(w).
\end{equation}
On the other hand, we have:
\begin{equation}\label{dimbruhat}
\dim(P_0z\dot{w}Q_0)=2\dim(P_0)-\dim(\Stab_{P_0\times Q_0}(z\dot{w})).
\end{equation}
The stabilizer $\Stab_{P_0\times Q_0}(z\dot{w})$ is the subgroup:
\begin{align*}
\Stab_{P_0\times Q_0}(z\dot{w})&=\{(a,b)\in P_0\times Q_0, \ az\dot{w}=z\dot{w}b\} \\
&\simeq \{a\in P_0, \ (z\dot{w})^{-1}az\dot{w}\in Q_0 \}\\
&=P_0\cap{}^{z\dot{w}}Q_0.
\end{align*}
Hence $Z_{P_0,w}$ has coarse closure if and only if $\dim\left(P_0/(P_0\cap {}^{z\dot{w}}Q_0)\right)=\ell(w)$. Since the property is satisfied when $P_0={}^z B$, we have $\dim\left( {}^zB/({}^zB\cap {}^{z\dot{w}}B)\right)=\ell(w)$, so we can rewrite the property as
\begin{equation}\label{equivcond}
\dim\left((P_0\cap{}^{z\dot{w}}Q_0) / ({}^zB\cap {}^{z\dot{w}}B)\right)=\dim\left(P_0/ {}^{z}B\right)
\end{equation}
Since $(B,T,z)$ is a frame for $\Zcal_0$ and ${}^I W \subset {}^{I_0}W$, equation \eqref{IWeq} shows that $P_0\cap {}^zB=P_0\cap {}^{z\dot{w}}B$, thus the inclusion $P_0\cap{}^{z\dot{w}}Q_0\subset P_0$ induces an embedding
\begin{equation}
(P_0\cap{}^{z\dot{w}}Q_0) / ({}^zB\cap {}^{z\dot{w}}B)\rightarrow P_0/ {}^{z}B.
\end{equation}
Hence \eqref{equivcond} is satisfied if and only if the image of $P_0\cap {}^{z\dot{w}}Q_0$ is open in $P_0/{}^zB$. Since $P_0/{}^{z}B\simeq L_0/({}^{z}B\cap L_0)$ it is also equivalent to $L_0\cap {}^{z\dot{w}}Q_0$ having open image in $L_0/({}^zB\cap L_0)$.

Denote by $B'$ the opposite Borel in $G$ of $B$ with respect to $T$. Then ${}^z B'\cap L_0$ is the opposite Borel of ${}^zB\cap L_0$ in $L_0$ with respect to ${}^z T$. Thus the image of $L_0\cap {}^{z\dot{w}}Q_0$ is open in $L_0/({}^z B\cap L_0)$ if and only if ${}^zB'\cap L_0 \subset {}^{z\dot{w}}Q_0$. It follows immediately from equation \eqref{IWeq} that ${}^zB'\cap L_0 = {}^{z\dot{w}}B'\cap L_0$. Finally, we find that $Z_{P_0,w}$ has coarse closure if and only if
\begin{equation}\label{equivcond2}
B'\cap {}^{(z\dot{w})^{-1}}L_0 \subset Q_0.
\end{equation}
The groups $B'\cap {}^{(z\dot{w})^{-1}}L_0$ and $B\cap {}^{(z\dot{w})^{-1}}L_0$ are opposite Borel subgroups of ${}^{(z\dot{w})^{-1}}L_0$ containing $T$. Since $B\subset Q_0$, equation \eqref{equivcond2} is simply equivalent to ${}^{(z\dot{w})^{-1}}L_0 \subset Q_0$, which is equivalent to ${}^{(z\dot{w})^{-1}}L_0 = M_0$. This terminates the proof.
\end{proof}

\begin{proof}[Proof of Theorem \ref{mainthm}]
The result follows immediately by combining Lemmas \ref{lemma criterion isom} and \ref{lemma coarse}.
\end{proof}

\begin{corollary}\label{cor norma GZip}
Write $w=xw_J$ as in \eqref{uniqueform}. The normalization of the Zariski closure $\overline{G}_w$ is the Stein factorization of the map $\tilde{\pi}:\overline{G}_{P_w,w}\to \overline{G}_w$. It is isomorphic to $\tilde{Z}_w:=\Spec(\Ocal(\overline{\GG}_{P_w,x}))$, where
\begin{equation}
\overline{\GG}_{P_w,x}=\overline{G}_{P_w,w}=\{(g,aP_w)\in G\times P/P_{w}, \ a^{-1}g\varphi(\overline{a})\in \overline{P_wz\dot{w}Q_w} \}
\end{equation}
and the first projection induces an isomorphism $G_{P_w,w}\simeq G_w$.
\end{corollary}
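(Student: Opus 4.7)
The plan is to obtain this corollary as an immediate specialization of Proposition~\ref{normalization canonical} to the parabolic $P_0 = P_w$. Theorem~\ref{mainthm} asserts that $P_w$ is the unique canonical parabolic subgroup for $w$, so in particular it satisfies both conditions (i) and (ii) of Definition~\ref{canonical}: the projection $\pi:Z_{P_w,w}\to Z_w$ is an isomorphism, and the stratum $Z_{P_w,w}$ has coarse closure. These are precisely the hypotheses of Proposition~\ref{normalization canonical}, so that proposition applies verbatim with $P_0 = P_w$.

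First I would invoke Theorem~\ref{mainthm} to guarantee that $P_w$ is a legitimate input for Proposition~\ref{normalization canonical}. Then, applying the latter with $P_0 = P_w$ yields directly each conclusion of the corollary: the identification $\overline{\GG}_{P_w,x} = \overline{G}_{P_w,w}$ (a consequence of the coarse closure property, which says that the fine stratum $Z_{P_w,w}$ is open in the coarse stratum $\ZZ_{P_w,x}$, so their Zariski closures in $G\times P/P_w$ coincide), the explicit Bruhat-type description $a^{-1}g\varphi(\overline{a})\in \overline{P_w z\dot{w} Q_w}$ coming from unwinding the definition of $\ZZ_{P_w,x}$ via the map $\psi_{P_w}:\GF^{(\Zcal,P_w)}\to \B^{\Zcal_{P_w}}$, the identification of the normalization of $\overline{G}_w$ with the Stein factorization of $\tilde{\pi}$ and equivalently with $\Spec(\Ocal(\overline{\GG}_{P_w,x}))$, and finally the statement that the first projection induces an isomorphism $G_{P_w,w}\simeq G_w$ on the open strata (condition (i) of canonicity).

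I do not anticipate any genuine obstacle here: the entire mathematical content of the corollary is already contained in Theorem~\ref{mainthm} (existence and uniqueness of a canonical parabolic) together with Proposition~\ref{normalization canonical} (the explicit form of the normalization for any canonical parabolic). The proof is therefore a short concatenation of these two results, with no new construction or computation needed beyond translating the abstract description in Proposition~\ref{normalization canonical} into the explicit set-theoretic form given in the statement.
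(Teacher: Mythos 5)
Your proposal is correct and is exactly the argument the paper intends: the corollary is stated without explicit proof precisely because it is the immediate specialization of Proposition~\ref{normalization canonical} to $P_0=P_w$, which is licensed by Theorem~\ref{mainthm}.
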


\subsection{Shimura varieties and Ekedahl-Oort strata} \label{subsec Shimura}
Let $X$ be the special fiber of a Hodge-type Shimura variety attached to a Shimura datum $(\GG,\XX)$ with hyperspecial level at $p$. Write $G:=G_{\ZZ_p}\times \FF_p$, where $G_{\ZZ_p}$ is a reductive $\ZZ_p$-model of $\GG_{\QQ_p}$. By Zhang \cite{ZhangEOHodge}, there exists a smooth morphism of stacks
\begin{equation}
\zeta:X\longrightarrow \GZip^\Zcal
\end{equation}
where $\Zcal$ is the zip datum attached to $(\GG,\XX)$ as in \cite{Goldring-Koskivirta-zip-flags}~\S6.2. The Ekedahl-Oort stratification of $X$ is defined as the fibers of $\zeta$. For $w\in {}^IW$, set:
\begin{equation}
X_{w}:=\zeta^{-1}(Z_{w}).
\end{equation}
By the smoothness of $\zeta$, this defines a stratification of $X$. Let ${}^z B \subset P_0 \subset P$ be a parabolic subgroup and define the partial flag space $X_{P_0}$ as the fiber product
\begin{equation} \label{eq-def-flag-space}
\xymatrix@1@M=5pt{
X_{P_0} \ar[r]^-{\zeta_{P_0}} \ar[d]_-{\pi} & \GF^{(\Zcal, P_0)} \ar[d]^-{\pi_{P_0}} \\
X \ar[r]_-{\zeta} & \GZip^{\Zcal}}
\end{equation} 
The map $\pi:X_{P_0}\to X$ is a $P/P_0$-bundle. For $w\in {}^{I_0}W$ and $x\in {}^{I_0}W^{J_0}$ define
\begin{align}
X_{P_0,w}&:=\zeta_0^{-1}(Z_{P_0,w}) \\
\XX_{P_0,x}&:=\zeta_0^{-1}(\ZZ_{P_0,x}).
\end{align}
We call $X_{P_0,w}$ the fine stratum attached to $w\in {}^{I_0}W$ and $\XX_{P_0,x}$ the coarse stratum attached to $x\in {}^{I_0}W^{J_0}$. All coarse and fine strata are smooth and locally closed, they define stratifications of $X_{P_0}$, and the Zariski closure of a coarse stratum is normal. Recall that we defined $\tilde{Z}_w:=\Spec(\Ocal(\overline{G}_{P_w,w}))$ (\Cor~\ref{cor norma GZip}).

\begin{corollary}\label{cor norma Shimura}
Let $w\in {}^IW$. The normalization of $\overline{X}_{w}$ is the Stein factorization of the map $\pi: \overline{X}_{P_w,w}\to\overline{X}_{w}$. It is isomorphic to $\overline{X}_{w}\times_{\GZip^\Zcal}\tilde{Z}_w$.
\end{corollary}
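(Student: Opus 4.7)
The plan is to deduce the statement from Corollary~\ref{cor norma GZip} by smooth base change along $\zeta$. Since $\zeta\colon X \to \GZip^{\Zcal}$ is smooth, the induced map $\zeta_{P_w}\colon X_{P_w} \to \GF^{(\Zcal, P_w)}$ is smooth as well (it is the base change of $\zeta$ along the $P/P_w$-bundle map $\pi_{P_w}$), and consequently $\overline{X}_{P_w, w} = \zeta_{P_w}^{-1}(\overline{Z}_{P_w, w})$. By Theorem~\ref{mainthm}, the stratum $Z_{P_w, w}$ has coarse closure, so $\overline{Z}_{P_w, w}$ is normal (\cite[\Prop~2.2.1(1)]{Goldring-Koskivirta-zip-flags}); since normality is preserved under smooth morphisms, $\overline{X}_{P_w, w}$ is normal. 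Moreover, Theorem~\ref{mainthm} asserts that $\pi\colon Z_{P_w, w} \to Z_w$ is an isomorphism, hence by base change so is $\pi\colon X_{P_w, w} \to X_w$.

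Next I would verify that $\pi\colon \overline{X}_{P_w, w} \to \overline{X}_w$ is proper and birational. Properness follows because $\pi\colon X_{P_w} \to X$ is a $P/P_w$-bundle with $P/P_w$ projective, and $\overline{X}_{P_w, w}$ is closed in $X_{P_w}$. Birationality follows from the isomorphism $X_{P_w, w} \simeq X_w$ on the dense open fine stratum. Taking the Stein factorization $\overline{X}_{P_w, w} \to \tilde{X}_w \to \overline{X}_w$ thus produces a finite birational morphism $\tilde{X}_w \to \overline{X}_w$ with $\tilde{X}_w = \Spec(\pi_* \Ocal_{\overline{X}_{P_w, w}})$; because $\overline{X}_{P_w, w}$ is normal and $\pi$ is birational, $\pi_* \Ocal$ coincides with the integral closure of $\Ocal_{\overline{X}_w}$ in its function field. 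This identifies $\tilde{X}_w$ as the normalization of $\overline{X}_w$.

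Finally, for the identification $\tilde{X}_w \simeq \overline{X}_w \times_{\GZip^{\Zcal}} \tilde{Z}_w$, I would invoke compatibility of Stein factorization with flat base change. Corollary~\ref{cor norma GZip} gives $\tilde{Z}_w = \Spec(\pi_* \Ocal_{\overline{Z}_{P_w, w}})$. Since $\zeta$ is smooth, hence flat, the base-change formula yields $\pi_* \Ocal_{\overline{X}_{P_w, w}} \simeq \zeta^* \pi_* \Ocal_{\overline{Z}_{P_w, w}}$, and applying relative $\Spec$ on both sides gives $\tilde{X}_w \simeq \overline{X}_w \times_{\overline{Z}_w} \tilde{Z}_w$, which equals $\overline{X}_w \times_{\GZip^{\Zcal}} \tilde{Z}_w$ since $\tilde{Z}_w \to \GZip^{\Zcal}$ factors through $\overline{Z}_w$. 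The main technical point is properly formulating flat base change for proper morphisms in the stacky setting; this should reduce to the classical scheme-theoretic case by passing to smooth atlases of $\GZip^{\Zcal}$ and $\GF^{(\Zcal, P_w)}$.
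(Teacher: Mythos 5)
Your argument is correct and matches what the paper intends: Corollary~\ref{cor norma Shimura} is the smooth (hence flat) base change along $\zeta$ of Corollary~\ref{cor norma GZip}, using that smoothness preserves normality, that $\pi$ is proper (a $P/P_w$-bundle), and that Stein factorization commutes with flat base change. The paper gives no separate proof, leaving exactly this deduction implicit, and your write-up fills it in faithfully.
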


\section{The canonical filtration}

Most of the content of this section can be found in \cite{Pink-Wedhorn-Ziegler-zip-data}. We merely unwind their proofs to make the link between the canonical filtration of a Dieudonn\'{e} space and the group $L_w$ defined previously. See also \cite[\S 4.4]{moonen-gp-schemes} and \cite{Boxer-thesis} for related results.

\subsection{Dieudonn\'{e} spaces and $GL_n$-zips}\label{subsec Dieudonne zips}

Let $H$ be a truncated Barsotti-Tate groups of level $1$ over $k$ of height $n$. Set $d:=\dim(\Lie(H))$ and write $D:=\DD(H)$ for its Dieudonn\'{e} space. It is a $k$-vector space of dimension $n$ endowed with a $\sigma$-linear endomorphism $\Fcal:D\to D$, a $\sigma^{-1}$-linear endomorphism $\Vcal:D\to D$ satisfying the conditions:
\begin{enumerate}[(1)]
\item $\Ker(\Fcal)=\Im(\Vcal)$,
\item $\Ker(\Vcal)=\Im(\Fcal)$,
\item $\rk(\Vcal)=d.$
\end{enumerate}
We say that $(D,\Fcal,\Vcal)$ is a Dieudonn\'{e} space of height $n$ and dimension $d$. Let $M^{(r)}_n(k)$ be the set of matrices in $M_n(k)$ of rank $r$. After choosing a $k$-basis of $D$, we may write $\Fcal=a\otimes \sigma$ and $\Vcal=b\otimes \sigma^{-1}$, where $(a,b)$ is in the set
$$\Xscr:=\{(a,b)\in M^{(n-d)}_n(k)\times M^{(d)}_n(k), \ a\sigma(b)=\sigma(b)a=0\}.$$

Note that for $(a,b)\in \Xscr$, we have $\Ker(a)=\Im(\sigma(b))=\sigma(\Im(b))$ and $\Im(a)=\Ker(\sigma(b))=\sigma(\Ker(b))$.

It is easy to see that two such pairs $(a,b)$ and $(a',b')$ yield isomorphic Dieudonn\'{e} spaces if and only if there exists $M\in GL_n(k)$ such that
\begin{equation}
a'=Ma\sigma(M)^{-1} \quad \textrm{ and } \quad b'=Mb\sigma^{-1}(M)^{-1}.
\end{equation}
This defines an action of $GL_n(k)$ on $\Xscr$ and we obtain a bijection between isomorphism classes of Dieudonn\'{e} spaces of height $n$ and dimension $d$ and the set of $GL_n(k)$-orbits in $\Xscr$. 

Let $(e_1,...,e_n)$ the canonical basis of $k^n$ and define
\begin{align}
V_1&:=\Span(e_1,...,e_{n-d})\\
V_2&:=\Span(e_{n-d+1},...,e_n)
\end{align}
Define $P:=\Stab(V_2)$, $Q:=\Stab(V_1)$, $L:=P\cap Q$, $U:=R_u(P)$ and $V:=R_u(Q)$. Consider the set
\begin{equation}
\Yscr:=\{ (a,b)\in \Xscr, \ \Ker(a)=V_2\}.
\end{equation}
The action of $GL_n(k)$ on $\Xscr$ restricts to an action of $P(k)$ on $\Yscr$ and the inclusion $\Yscr \subset \Xscr$ induces a bijection between $P(k)$-orbits in $\Yscr$ and $GL_n(k)$-orbits in $\Xscr$.

\begin{lemma} There is a natural bijection
\begin{equation}
\Psi:\Yscr \longrightarrow GL_n(k)/V
\end{equation}
\end{lemma}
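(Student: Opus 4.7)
The plan is to construct an explicit inverse $\Phi : GL_n(k)/V \to \Yscr$ and then set $\Psi := \Phi^{-1}$. I first introduce the standard pair
$$a_0 := \begin{pmatrix} I_{n-d} & 0 \\ 0 & 0 \end{pmatrix}, \qquad b_0 := \begin{pmatrix} 0 & 0 \\ 0 & I_d \end{pmatrix},$$
which are $\sigma$-fixed and satisfy $a_0 b_0 = b_0 a_0 = 0$, $\Ker(a_0) = V_2$, $\Im(a_0) = V_1$. For $g \in GL_n(k)$, I define $\Phi(g) := (ga_0,\, b_0\, \sigma^{-1}(g)^{-1})$. The verification $\Phi(g) \in \Yscr$ is immediate: since $g$ is invertible, $ga_0$ has rank $n-d$ and kernel $V_2$, while the identity $\sigma(b_0 \sigma^{-1}(g)^{-1}) = b_0 g^{-1}$ together with $a_0 b_0 = b_0 a_0 = 0$ yields the two orthogonality relations of $\Xscr$.

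To see $\Phi$ factors through $GL_n(k)/V$, every $v \in V$ has block form $\begin{pmatrix} I & X \\ 0 & I \end{pmatrix}$, from which a direct computation gives $va_0 = a_0$ and $b_0 \sigma^{-1}(v) = b_0$; hence $\Phi(gv) = \Phi(g)$. For injectivity of the induced map $\overline{\Phi} : GL_n(k)/V \to \Yscr$, suppose $\Phi(g) = \Phi(g')$ and set $h := g^{-1}g'$. The equalities become $ha_0 = a_0$ and $b_0 \sigma^{-1}(h) = b_0$; writing $h$ in $2 \times 2$ block form relative to $V_1 \oplus V_2$, the first forces the upper-left block to be $I_{n-d}$ and the lower-left to vanish, and the second forces the lower-right to be $I_d$, so $h \in V$.

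For surjectivity, take $(a,b) \in \Yscr$ and set $W := \Im(a)$. The conditions $a\sigma(b) = \sigma(b)a = 0$ combined with the rank equalities $\rk(a) = n-d$ and $\rk(b) = d$ force $\Im(b) = V_2$ and $\Ker(b) = \sigma^{-1}(W)$, hence $\Ker(\sigma(b)) = W$ and $\Im(\sigma(b)) = V_2$. Choose lifts $y_j \in k^n$ for $n-d < j \leq n$ with $\sigma(b) y_j = e_j$, and let $g$ be the matrix whose first $n-d$ columns are $ae_1, \ldots, ae_{n-d}$ (a basis of $W$ since $a|_{V_1}$ is injective) and whose last $d$ columns are $y_{n-d+1}, \ldots, y_n$. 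These last columns are linearly independent modulo $W = \Ker(\sigma(b))$ because their images $e_j = \sigma(b)y_j$ are, so $g \in GL_n(k)$. A direct check then verifies $ga_0 = a$ and $\sigma(b) g = b_0$, i.e.\ $b_0 \sigma^{-1}(g)^{-1} = b$, so $\Phi(g) = (a,b)$.

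The main obstacle is the bookkeeping at the surjectivity step, where one must first extract the correct kernels and images of $b$ and $\sigma(b)$ from the defining conditions of $\Yscr$ before the lifts $y_j$ can be chosen; once this is in place, every remaining verification reduces to a block-matrix computation with the standard pair $(a_0, b_0)$.
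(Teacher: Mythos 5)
Your proposal is correct and is essentially the paper's own construction run in reverse. The paper defines $\Psi(a,b):=f_H V$ by choosing a complement $H$ to $\Im(a)$ and setting $f_H|_{V_1}=a$, $f_H|_{V_2}=(\sigma(b)|_H)^{-1}$, then checks independence of $H$ and leaves bijectivity to the reader; you instead build the inverse $\Phi(g):=(ga_0,\,b_0\sigma^{-1}(g)^{-1})$ from the standard pair $(a_0,b_0)$ and verify everything, and the matrix $g$ you construct in the surjectivity step (columns $ae_1,\dots,ae_{n-d},y_{n-d+1},\dots,y_n$) is precisely the paper's $f_H$ with $H=\Span(y_{n-d+1},\dots,y_n)$, so the two maps agree.
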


\begin{proof}
Let $(a,b)\in \Yscr$ and choose a subspace $H\subset k^n$ such that $\Im(a)\oplus H=k^n$. Define a matrix $f_H\in GL_n(k)$ by the following diagram
\begin{equation}\label{diagram fH}
\xymatrix@C=1pt{
k^n \ar[d]_{f_H}&=&V_1 \ar[d]^{a} & \oplus & V_2  \ar[d]^{\sigma(b)^{-1}}\\
k^n &=& \Im(a)& \oplus &H}.
\end{equation}
In other words, $f_Hv=av$ for $v\in V_1$, and if $v\in V_2$, then $f_Hv$ is the only element $h\in H$ such that $\sigma(b)h=v$ (note that $\Im(a)=\Ker(\sigma(b))$, so this element is well-defined). It is clear that $f_H$ is invertible.

If $H'$ denotes another subspace such that $\Im(a)\oplus H'=k^n$, then we can write $f_{H'}= f_H\alpha$, for some $\alpha \in GL_n(k)$. It is clear that $\alpha(v)=v$ for all $v\in V_1$. Furthermore, for $v\in V_2$, one must have $\sigma(b)(f_{H'}v-f_Hv)=0$, thus $f_H(\alpha(v)-v)\in \Ker(\sigma(b))=\Im(a)$, so $\alpha(v)-v\in V_1$. This shows that $\alpha \in V$. It follows that $(a,b)\mapsto f_H$ induces a well-defined map
$\Psi:\Yscr \longrightarrow GL_n(k)/V$. We leave it to the reader to check that this map is a bijection.
\end{proof}

Define a subgroup of $P\times Q$ by
\begin{equation}
E:=\{(M_1,M_2)\in P\times Q, \ \varphi(\overline{M}_1)=\overline{M}_2\}.
\end{equation}
Let this group acts on $GL_n(k)$ by the rule $(M_1,M_2)\cdot g:=M_1gM_2^{-1}$.

\begin{proposition} The map $\Psi$ induces a bijection
\begin{equation}
P(k)\backslash \Yscr \longrightarrow E\backslash GL_n(k)
\end{equation}
Hence there is a bijection between isomorphism classes of Dieudonn\'{e} spaces of height $n$ and dimension $d$ and the set of $E$-orbits in $GL_n(k)$.
\end{proposition}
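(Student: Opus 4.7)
The plan is to check that the bijection $\Psi$ carries the $P(k)$-action on $\Yscr$ to an action on $GL_n(k)/V$ which coincides with the natural $E$-action, so that it descends to a bijection on orbit spaces. I would start by inverting $\Psi$ explicitly: if $\Psi(a,b) = f_H V$, the defining diagram \eqref{diagram fH} is equivalent to the compact formulas
\[
a = f_H \pi_{V_1}, \qquad \sigma(b) = \pi_{V_2} f_H^{-1},
\]
where $\pi_{V_i}: k^n \to V_i$ are the standard projections along the complementary summand. Using the block shape of elements of $V = R_u(Q)$, one sees immediately that $v\pi_{V_1} = \pi_{V_1}$ and $\pi_{V_2} v^{-1} = \pi_{V_2}$ for $v \in V$, so these expressions depend only on the coset $f_H V$ and give an inverse to $\Psi$.

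Next, I would transport the $P$-action through $\Psi$. Fix $M \in P(k)$ and decompose $M = \ell u$ with $\ell = \overline{M} \in L$ and $u \in R_u(P)$. Set $f' := M f_H \sigma(\ell)^{-1}$. Plugging $f'$ into the inverse formulas above, I would check that the recovered pair is $M \cdot (a,b) = (M a \sigma(M)^{-1}, M b \sigma^{-1}(M)^{-1})$. This reduces to the two block identities
\[
\sigma(\ell)\pi_{V_1} = \pi_{V_1}\sigma(M), \qquad \pi_{V_2}\sigma(\ell) = \sigma(M)\pi_{V_2},
\]
which hold because $\sigma(\ell)$ is block diagonal while $\sigma(u) \in R_u(P)$ fixes $\pi_{V_1}$ on the right and $\pi_{V_2}$ on the left. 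This yields $\Psi(M\cdot(a,b)) = M f_H \varphi(\overline{M})^{-1} V$, and this block computation is the one substantive step of the proof.

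Finally, I would match the transported action with the $E$-action. The pair $(M, \varphi(\overline{M}))$ lies in $E$ since $\varphi(\overline{M}) \in L \subset Q$ is its own Levi projection; conversely, any $(p,q) \in E$ equals $(p, \varphi(\overline{p}))$ multiplied on the right by $(1, v)$ for a unique $v \in V$. Since $(1,v) \in E$ acts on $GL_n(k)$ by right multiplication by $v^{-1}$, it acts trivially on $GL_n(k)/V$, so $E$-orbits on $GL_n(k)$ coincide with $E$-orbits on $GL_n(k)/V$, which under the transported action are precisely the $P$-orbits described above. Combined with the already established bijection $GL_n(k) \backslash \Xscr \leftrightarrow P(k) \backslash \Yscr$, this gives the desired bijection $P(k) \backslash \Yscr \to E \backslash GL_n(k)$.
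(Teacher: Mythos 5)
Your proposal is correct and follows essentially the same route as the paper: the heart of the argument in both is the transport relation $Mf_H=f'\sigma(\overline{M})$, which you obtain via the inverse formulas $a=f_H\pi_{V_1}$, $\sigma(b)=\pi_{V_2}f_H^{-1}$ and the block identities for $\sigma(\ell),\sigma(u)$, while the paper verifies the same relation directly on $V_1$ and $V_2$ after choosing $H'=M(H)$. Your closing discussion of the $\{1\}\times V$-factor and the reduction of $E$-orbits on $GL_n(k)$ to $P$-orbits on $GL_n(k)/V$ is a clean way to fill in the bijectivity step that the paper leaves to the reader.
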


\begin{proof}
Let $M\in P(k)$, $(a,b)\in \Yscr$ and set $(a',b'):=(Ma\sigma(M)^{-1},Mb\sigma^{-1}(M)^{-1})$. Note that $\Im(a')=M(\Im(a))$. Choose a subspace $H$ such that $\Im(a)\oplus H=k^n$ and set $H':=M(H)$. Let $\overline{M}\in L(k)$ denote the Levi component of $M\in P(k)$. Finally, write $f_H$ and $f'_{H'}$ for the maps attached to $(a,b,H)$ and $(a',b',H')$ respectively by the previous construction. We claim that one has the relation:
\begin{equation}\label{transform}
Mf_H=f'_{H'}\sigma(\overline{M}).
\end{equation}
First assume $v\in V_1$. Then $f'_{H'}\sigma(\overline{M})v=Ma\sigma(M)^{-1}\overline{M}v$. Since $\sigma(M)^{-1}\overline{M}\in U$, we have $\sigma(M)^{-1}\overline{M}v-v \in V_2$, hence $f'_{H'}\sigma(\overline{M})v=Mav=Mf_Hv$.

Now if $v\in V_2$, the element $f_{H}v$ is the only element $h\in H$ satisfying $\sigma(b)h=v$. Similarly, $f_{H'}\sigma(\overline{M})v$ is the only element $h'\in H'=M(H)$ such that $\sigma(b')h'=\sigma(\overline{M})v$. Hence $\sigma(\overline{M})^{-1}\sigma(M)\sigma(b)M^{-1}h'=v$. But $\sigma(\overline{M})^{-1}\sigma(M) \in U$ and $\sigma(b)M^{-1}h'\in V_2$, so we deduce $\sigma(b)M^{-1}h'=v$, and finally $M^{-1}h'=h$ as claimed.

This shows that $\Psi$ induces a well-defined map $P(k)\backslash \Yscr \to E\backslash GL_n(k)$. We leave it to the reader to check that it is bijective.
\end{proof}

\subsection{The canonical filtration}
Let $(D,\Fcal,\Vcal)$ be a Dieudonn\'{e} space. The operators $\Vcal$ and $\Fcal^{-1}$ act naturally on the set of subspaces of $D$. It can be shown that there exists a flag of $D$ which is stable by $\Vcal$ and $\Fcal^{-1}$ and which is coarsest among all such flags. This flag is called the canonical filtration of $D$. It is obtained by applying all finite combinations of $\Vcal,\Fcal^{-1}$ to the flag $0\subset D$.

Choose a basis of $D$ and write $\Fcal=a\otimes \sigma$ and $\Vcal=b\otimes \sigma^{-1}$ with $(a,b)\in \Xscr$. By choosing an appropriate basis, we will asssume that $(a,b)\in \Yscr$.
\begin{rmk}\label{zeroone}
Actually, there exists a basis such that $(a,b)\in \Yscr$ and such that the coefficents of $a,b$ are either $0$ or $1$ and each column and each row has at most one non-zero coefficient.
\end{rmk}
Let $H\subset k^n$ be a subspace such that $\Im(a)\oplus H=k^n$ and let $f_H\in GL_n(k)$ be the element defined in diagram \eqref{diagram fH}. We have the following easy lemma:

\begin{lemma}
For any subspace $W\subset k^n$, one has the following relations:
\begin{align}
\Vcal(W)&=V_2 \cap \left(\sigma^{-1}(f_H^{-1}W)+V_1\right) \\
\Fcal^{-1}(W)&=V_2+\left(\sigma^{-1}(f_H^{-1}W)\cap V_1\right).
\end{align}
\end{lemma}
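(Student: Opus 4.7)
The plan is to work out both identities by a direct computation, using the explicit description of $f_H$ in diagram \eqref{diagram fH} together with the defining relations $\Ker(a) = V_2$, $\Im(b) = V_2$ and $\Ker(b) = \sigma^{-1}(\Im(a))$ of a Dieudonn\'e pair $(a,b) \in \Yscr$. The basic point is that on $V_1$ the map $f_H$ coincides with $a$, and on $V_2$ it coincides with $\sigma(b)^{-1}: V_2 \xrightarrow{\sim} H$; inverting these gives, for any $w = w_1 + w_2 \in \Im(a) \oplus H$,
\begin{equation*}
\sigma^{-1}(f_H^{-1} w) \;=\; \sigma^{-1}(f_H^{-1} w_1) \;+\; b\,\sigma^{-1}(w_2)
\;=\; v_1 \;+\; \Vcal(w_2),
\end{equation*}
with $v_1 := \sigma^{-1}(f_H^{-1}w_1) \in V_1$ and $\Vcal(w_2) \in \Im(b) = V_2$. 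This is the identity on which everything hinges.

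For the formula $\Fcal^{-1}(W) = V_2 + (\sigma^{-1}(f_H^{-1}W) \cap V_1)$, I would start from the observation that $V_2 = \Ker(\Fcal)$ is automatically contained in $\Fcal^{-1}(W)$. Decomposing an arbitrary $v = v_1 + v_2 \in V_1 \oplus V_2$, the relation $\Fcal(v) = a\sigma(v_1)$ together with $\sigma(v_1) \in V_1$ gives $\Fcal(v) = f_H(\sigma(v_1))$. Hence $\Fcal(v) \in W$ if and only if $v_1 \in \sigma^{-1}(f_H^{-1}W)$, which is precisely the claim.

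For $\Vcal(W) = V_2 \cap (\sigma^{-1}(f_H^{-1}W) + V_1)$, I would argue as follows. For any $w \in W$ write $w = w_1 + w_2$ as above; since $\sigma^{-1}(w_1) \in \sigma^{-1}(\Im(a)) = \Ker(b)$, one has $\Vcal(w_1) = 0$, so $\Vcal(w) = \Vcal(w_2) \in V_2$, and the displayed decomposition of $\sigma^{-1}(f_H^{-1}w)$ shows $\Vcal(w) \equiv \sigma^{-1}(f_H^{-1}w) \pmod{V_1}$. This gives the inclusion $\subset$. Conversely, an element of the right-hand side can be written $y = \sigma^{-1}(f_H^{-1}w) - u$ with $u \in V_1$, $w \in W$; projecting the displayed decomposition onto $V_2$ along $V_1$ yields $y = \Vcal(w_2) = \Vcal(w) \in \Vcal(W)$.

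There is really no serious obstacle here; the only thing that requires a little care is that $\Vcal$ is $\sigma^{-1}$-linear, so one must keep track of how $\sigma^{-1}$ interacts with $f_H^{-1}$ when applying it to the two pieces $w_1 \in \Im(a)$ and $w_2 \in H$. Once that bookkeeping is in place, both identities fall out of the single decomposition above.
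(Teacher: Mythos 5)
Your proof is correct. The paper states this as an ``easy lemma'' and offers no argument at all, so there is no authorial proof to compare against; your computation is the natural one, resting on the observation that $f_H$ restricted to $V_1$ is $a$ and restricted to $V_2$ is $\sigma(b)^{-1}$, and the resulting decomposition $\sigma^{-1}(f_H^{-1}w)=v_1+\Vcal(w_2)$ with $v_1\in V_1$, $\Vcal(w_2)\in V_2$ does exactly what is needed. The identities $\Ker(a)=V_2$, $\Im(b)=V_2$, $\Ker(b)=\sigma^{-1}(\Im(a))$ that you invoke are the ones recorded in the paper for elements of $\Xscr$ and $\Yscr$, the reduction of $\Fcal(v)$ to $a\sigma(v_1)=f_H(\sigma(v_1))$ is correct, and both inclusions in the $\Vcal(W)$ identity follow cleanly from projecting the key decomposition onto $V_2$ along $V_1$. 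This fills the gap the paper leaves to the reader.
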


In particular, the right-hand terms are independent of the choice of $H$. This observation suggests the following definition: 
\begin{definition}
For $f\in GL_n(k)$, there exists a unique coarsest flag $\Fl(f)$ of $k^n$ satisfying the following properties:
\begin{enumerate}[(1)]
\item For any $W\in \Fl(f)$, the following inclusions hold 
\begin{equation}\label{inclusions}
V_2 \cap \left(\sigma^{-1}(f^{-1}W)+V_1\right) \subset W \subset V_2+\left(\sigma^{-1}(f^{-1}W)\cap V_1\right).
\end{equation}
\item For any $W\in \Fl(f)$, all subspaces appearing in \eqref{inclusions} are in $\Fl(f)$.
\end{enumerate}
\end{definition}
The flag $\Fl(f)$ is simply the canonical flag attached to the Dieudonn\'{e} space corresponding to the left-coset $fV$ under the bijection $\Psi$.

\begin{lemma}
Let $f\in GL_n(k)$. The following assertions hold
\begin{enumerate}[(1)]
\item For all $v\in V$, one has $\Fl(fv)=\Fl(f)$.
\item For $M\in P$, one has $\Fl(Mf\sigma(\overline{M})^{-1})=M\Fl(f)$.
\end{enumerate}
\end{lemma}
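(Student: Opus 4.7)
The plan is to show in each case that both sides satisfy the same defining property, hence coincide.

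For (1), the key observation is that every element of $V=R_u(Q)$ fixes $V_1$ pointwise and acts trivially on $k^n/V_1$, from which it follows that $v'X+V_1 = X+V_1$ and $v'X\cap V_1 = X\cap V_1$ for any subspace $X \subset k^n$ and any $v'\in V$. Writing $(fv)^{-1}W = v^{-1}f^{-1}W$ and applying $\sigma^{-1}$ gives $\sigma^{-1}((fv)^{-1}W)=\sigma^{-1}(v)^{-1}\sigma^{-1}(f^{-1}W)$, and $\sigma^{-1}(v)$ still lies in $V$ because $V$ is defined over $\FF_p$. Applying the two identities above with $X=\sigma^{-1}(f^{-1}W)$ shows that the two basic operations $W\mapsto V_2\cap(\sigma^{-1}(f^{-1}W)+V_1)$ and $W\mapsto V_2+(\sigma^{-1}(f^{-1}W)\cap V_1)$ produce the same subspace whether computed with $f$ or with $fv$. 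Consequently $\Fl(f)$ and $\Fl(fv)$ are the coarsest flags characterized by the same conditions, so they must agree.

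For (2), set $f':=Mf\sigma(\overline{M})^{-1}$. A direct computation gives $\sigma^{-1}((f')^{-1}MW)=\overline{M}\,\sigma^{-1}(f^{-1}W)$. Since $\overline{M}\in L$ preserves both $V_1$ and $V_2$, the two basic operations on $MW$ with respect to $f'$ evaluate to
\[
\overline{M}\bigl(V_2\cap(\sigma^{-1}(f^{-1}W)+V_1)\bigr) \quad \text{and} \quad \overline{M}\bigl(V_2+(\sigma^{-1}(f^{-1}W)\cap V_1)\bigr),
\]
and the subspaces inside the parentheses lie in $\Fl(f)$ by the closure property. If I can replace $\overline{M}$ by $M$ on the outside, then $M\Fl(f)$ is closed under the $f'$-operations and the defining inclusions follow.

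The main subtlety will be precisely this replacement. Writing $M=\overline{M}u$ with $u\in U=R_u(P)$, the element $u$ fixes $V_2$ pointwise and acts trivially modulo $V_2$. The first of the two subspaces above is contained in $V_2$, while the second contains $V_2$; in both cases $u$ acts as the identity, so $MX=\overline{M}X$. This will show that $M\Fl(f)$ satisfies the defining property of $\Fl(f')$, so $\Fl(f')\subset M\Fl(f)$. Applying the symmetric argument to $M^{-1}\in P$ and $f'$ (and using $M^{-1}f'\sigma(\overline{M}^{-1})^{-1}=f$) will yield the reverse inclusion, completing the proof. All remaining steps are routine manipulations based on the block structure of $P$ and $Q$ relative to $V_1,V_2$.
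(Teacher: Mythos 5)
Your proof is correct. The paper in fact leaves the verification of this lemma to the reader, and your argument is the natural direct computation one expects. The two observations you isolate — that $V=R_u(Q)$ fixes $V_1$ pointwise and acts trivially mod $V_1$ (giving $v'X+V_1=X+V_1$ and $v'X\cap V_1=X\cap V_1$), and dually that $U=R_u(P)$ fixes $V_2$ pointwise and acts trivially mod $V_2$ (giving $uX=X$ whenever $X\subset V_2$ or $V_2\subset X$) — are exactly the block-structure facts needed. The reduction $M=\overline{M}u$ and the symmetry argument via $M^{-1}$ for the reverse inclusion both go through: note $\overline{M^{-1}}=\overline{M}^{-1}$, so $M^{-1}f'\sigma(\overline{M^{-1}})^{-1}=f$, as you say.

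One could also obtain the result more conceptually: $fv$ and $f$ represent the same coset in $GL_n(k)/V$, hence the same Dieudonn\'e space under $\Psi$, and $M$ implements an isomorphism of Dieudonn\'e spaces carrying the canonical flag of one to the canonical flag of the other via relation \eqref{transform}. Your proof is the elementary unwinding of that observation, and it has the advantage of making the $\Fl$-preserving symmetries of the two operations in \eqref{inclusions} completely explicit, which is in the spirit of how the paper deploys those formulas in the proof of Proposition~\ref{can prop}.
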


We leave the verification of the lemma to the reader. In particular, the conjugation class of $\Fl(f)$ depends only on the $E$-orbit of $f$. Denote by $P(f):=\Stab(\Fl(f))$. Since $\Fl(f)$ contains $\Im(V)=V_2$, we have $P(f) \subset P$. Furthermore, for $v\in V$ and $M\in P$, one has
\begin{align}
P(fv)&=P(f)  \label{parab change 1} \\
P(Mf\sigma(\overline{M})^{-1})&={}^M P(f). \label{parab change 2}
\end{align}

\subsection{The canonical flag versus $P_w$}
Denote by $T$ the diagonal torus of $G:=GL_n$, and let $B$ be the Borel subgroup of upper-triangular matrices. The Weyl group $W(G,T)$ is the symmetric group $S_n$, which we identify with a subgroup of $G(k)$ be letting it act on $k^n$ by $\tau(e_i)= e_{\tau(i)}$ for all $\tau \in S_n$ and $i\in \{1,...,n\}$.

Using the notations of section \ref{subsec frame}, define a permutation
\begin{equation}
z:=w_0 w_{0,I}=\left(\begin{matrix}
0&I_{n-d} \\
I_d&0
\end{matrix}\right).
\end{equation}
Then $(B,T,z)$ is a frame for the zip datum $(G,P,Q,L,M,\varphi)$. For $w\in {}^I W$, set $f_w:=zw$. By the parametrization \eqref{param}, the set $\{zw,w\in {}^I W\}$ is a set of representatives of the $E$-orbits in $G$. For $w\in {}^IW$, it is easy to see that any $W\in \Fl(f_w)$ is spanned by $(e_i)_{i\in C_W}$ for some subset $C_W\subset \{1,...,n\}$. In particular we have $T\subset P(f_w)$. Note that for all $w\in {}^IW$, we have simplified formulas:
\begin{align}
V_2 \cap \left(\sigma^{-1}(f_w^{-1}W)+V_1\right)&=V_2 \cap f_w^{-1}W \\
V_2+\left(\sigma^{-1}(f_w^{-1}W)\cap V_1\right)&=V_2+f_w^{-1}W.
\end{align}

There is a unique Levi subgroup $L(f_w)\subset P(f_w)$ containing $T$. Finally, for $w\in {}^IW$, denote by $L_w\subset L$ and $P_w\subset P$ the subgroups defined in \eqref{canlevi} and \eqref{canparab}.

\begin{proposition}\label{can prop}
We have $P(f_w)=P_w$ and $L(f_w)=L_w$.
\end{proposition}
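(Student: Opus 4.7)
The plan is to prove $L(f_w)=L_w$ by matching $T$-roots, and then deduce $P(f_w)=P_w$ from the fact that both are parabolics of $P$ containing ${}^{z}B$ with this common Levi. The key observation is that, since $f_w=z\dot w$ is a permutation matrix, $T\subset P(f_w)$, so every $W\in\Fl(f_w)$ has the form $W_C=\Span(e_i:i\in C)$ for some $C\subset\{1,\ldots,n\}$.

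First I would unwind $\Fl(f_w)$ combinatorially. For $w\in{}^IW$ the simplified formulas $V_2\cap(\sigma^{-1}(f_w^{-1}W)+V_1)=V_2\cap f_w^{-1}W$ and $V_2+(\sigma^{-1}(f_w^{-1}W)\cap V_1)=V_2+(f_w^{-1}W\cap V_1)$ hold (since $f_w$ permutes the basis), and they translate the defining inclusions into two closure rules on $C$:
\begin{enumerate}[(a)]
\item if $i\leq n-d$ and $i\in C$, then $zw(i)\in C$;
\item if $j>n-d$ and $zw(j)\in C$, then $j\in C$.
\end{enumerate}
The canonical (coarsest) flag then corresponds to the partition of $\{1,\ldots,n\}$ into equivalence classes $\sim$ generated by (a)--(b), and $L(f_w)$ is the standard block Levi of $L$ preserving this partition.

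Second I would compute $L_w$ root-theoretically. Since $T$ is split over $\FF_p$, the Frobenius $\varphi$ acts trivially on $W(G,T)=S_n$, so the defining property of $M_w$ reduces to saying that $\Phi_{M_w}\subset\Phi_L$ is the largest $(zw)$-stable subset, namely $\Phi_{M_w}=\bigcap_{k\in\ZZ}(zw)^k\Phi_L$; hence $\Phi_{L_w}=(zw)\Phi_{M_w}=\Phi_{M_w}$. A root $e_i-e_j$ thus lies in $\Phi_{L_w}$ iff every iterate $(zw)^k$ keeps $i$ and $j$ on the same side of the partition $\{1,\ldots,n-d\}\sqcup\{n-d+1,\ldots,n\}$, and a direct combinatorial check shows this is equivalent to $i\sim j$ under (a)--(b). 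This yields $L(f_w)=L_w$.

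Finally, $V_2=\Im\Vcal\in\Fl(f_w)$ gives $P(f_w)\subset\Stab(V_2)=P$. Both $P(f_w)\cap L$ and $P_w\cap L$ are parabolics of $L$ with common Levi $L_w$; I would check that the strictly-increasing chain structure of the canonical flag induces an ordering of the $\sim$-classes of step~1 that is compatible with the Borel ${}^{z}B\cap L$, so both parabolics contain ${}^{z}B$ and coincide. The main obstacle is the combinatorial matching in the third paragraph: the closure rules (a)--(b) are asymmetric (forward under $zw$ from $V_1$-indices, backward under $zw$ into $V_2$-indices), whereas the root intersection $\bigcap_k(zw)^k\Phi_L$ is $\ZZ$-symmetric, so a careful case analysis on the orbits of $zw$ is required to establish the equivalence.
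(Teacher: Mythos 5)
Your approach is genuinely different from the paper's. You aim to compute both sides explicitly: $L(f_w)$ as the block Levi attached to a partition coming from closure rules on index sets, and $L_w$ via the identity $\Phi_{L_w}=\bigcap_{k\in\ZZ}(zw)^k\Phi_L$ (which is correct, since $T$ is split so $\varphi$ preserves the root datum), and then to match the two descriptions directly. The paper never writes either side down explicitly; instead it shows $L_w\subset L(f_w)$ by propagating the invariance ${}^{f_w}\varphi(L_w)=L_w$ through the inductive construction of $\Fl(f_w)$, and $L(f_w)\subset L_w$ by verifying that $f_w$ permutes the blocks of the flag, so that $L(f_w)$ satisfies the defining invariance and the maximality in the definition of $M_w$ applies. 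Your explicit root-theoretic description of $L_w$ is a nice reformulation, and the broad strategy could in principle be made to work.

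However, your plan has a real gap exactly where you flag ``the main obstacle'': the equivalence between ``$i\sim j$ under (a)--(b)'' and ``$(zw)^k(i)$ and $(zw)^k(j)$ lie on the same side of the $\{1,\ldots,n-d\}\sqcup\{n-d+1,\ldots,n\}$ partition for all $k\in\ZZ$'' is asserted, not proved, and it is the whole content of the proposition. Moreover $\sim$ is not precisely defined: (a)--(b) are closure conditions on a subset $C\subset\{1,\ldots,n\}$, not a direct relation on indices, and what you actually need is the partition into blocks of the coarsest chain of closed subsets (closed additionally under the operations $C\mapsto\{j>n-d: zw(j)\in C\}$ and $C\mapsto\{j: j>n-d \text{ or } zw(j)\in C\}$); identifying this with ``symmetrize and transitively close the arrows $i\to zw(i)$ for $i\leq n-d$ and $zw(j)\to j$ for $j>n-d$'' is itself something to argue. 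Similarly, your final step deferring to ``a compatibility check'' is nontrivial: one must show ${}^zB\subset P(f_w)$ before one can recover $P(f_w)$ from $L(f_w)$, and this corresponds to the first paragraph of the paper's proof (using $w\in{}^IW$ and \eqref{IWeq}). Until these two steps are actually carried out, the proof is incomplete; the paper's use of the maximality of $L_w$ is precisely how it avoids the case analysis you anticipate.
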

\begin{proof}
We will show first that ${}^zB\subset P(f_w)$. Clearly it suffices to show ${}^zB\cap L\subset P(f_w)$. Note that since $w\in {}^IW$, we have ${}^zB\cap L=B\cap L = {}^{zw}B \cap L$. From this it follows that if $W\subset k^n$ is a subspace such that ${}^zB\cap L\subset \Stab(W)$, then ${}^zB\cap L$ stabilizes also $\sigma^{-1}(f_w^{-1}(W))$. From this it follows easily by induction that ${}^zB\cap L$ stabilizes $\Fl(f_w)$, hence ${}^zB\cap L\subset P(f_w)$ as claimed. 

To finish the proof, it suffices to show the second assertion. By definition we have ${}^{f_w}\varphi(L_w)=L_w$. Hence if $W\subset k^n$ is a subspace such that $L_w\subset \Stab(W)$ then $L_w\subset \Stab(\sigma^{-1}(f_w^{-1}(W)))$. From this, it follows again by an easy induction that $L_w$ stabilizes $\Fl(f_w)$, so $L_w\subset P(f_w)$. Since $L_w$ contains the torus $T$, we deduce that $L_w\subset L(f_w)$.

Finally, we must show that ${}^{f_w}\varphi(L(f_w))=L(f_w)$. Since $L(f_w)$ is clearly defined over $\FF_p$, this is the same as ${}^{f_w}L(f_w)=L(f_w)$. Let $k^n=D_1\oplus...\oplus D_m$ denote the decomposition attached to $L(f_w)$, numbered so that the filtration $\Fl(f_w)$ is composed of the subspaces $W_j:=\bigoplus_{j=1}^i D_j$ for $1\leq i \leq m$. There exists an integer $1\leq r \leq m$ such that $W_r=V_2$ (and then necessarily $V_1=\bigoplus_{j=r+1}^m D_j$). We need to show that $f_w$ permutes the $(D_i)_{1\leq i \leq m}$. For this, it suffices to show that if $f_w(D_i)\cap D_j\neq 0$, then $D_j\subset f_w(D_i)$ for all $1\leq i,j\leq m$.

First assume $1\leq j \leq r$ and let $1\leq i \leq m$ be the smallest integer such that $D_j\cap f_w(D_i)\neq 0$. He have $0\neq D_j\cap f_w(D_i)\subset V_2\cap f_w(W_i)$, which implies $D_j\subset V_2\cap f_w(W_i)$. By minimality of $i$, we deduce $D_j\subset f_w(D_i)$.

Now assume $r< j \leq m$ and let $1\leq i \leq m$ be the smallest integer such that $D_j\cap f_w(D_i)\neq 0$. Then $0\neq D_j\cap f_w(D_i)\subset V_2+ f_w(W_i)$, which implies $D_j\subset (V_2+ f_w(W_i))\cap V_1= f_w(W_i)\cap V_1\subset f_w(W_i)$. By minimality of $i$, we deduce $D_j\subset f_w(D_i)$, which terminates the proof.

\end{proof}

\bibliographystyle{amsalpha}
\bibliography{biblio_overleaf}

\end{document}